\documentclass{article}
\usepackage{amssymb}
\usepackage{amsmath}
\usepackage{amsthm}
\usepackage[pdftex]{graphicx}
\usepackage{tikz}
\usepackage{mathtools}
\usepackage[backend=biber,style=numeric,maxcitenames=10,mincitenames=10,maxbibnames=1000,sorting=nty,isbn=false,url=false,eprint=true]{biblatex}
\usepackage{geometry}
\geometry{left=35mm,right=35mm,top=35mm,bottom=35mm}
\usepackage{here}
\usepackage[affil-it]{authblk}
\usepackage{diagbox}

\DeclareFieldFormat{doi}{%
  \mkbibacro{DOI}\addcolon\space
  \ifhyperref
    {\href{https://doi.org/#1}{\nolinkurl{https://doi.org/#1}}}
    {\nolinkurl{https://doi.org/#1}}}

\renewbibmacro{in:}{}
\addbibresource{game-variants.bib}

\DeclareMathAlphabet{\mymathbb}{U}{BOONDOX-ds}{m}{n}

\newcommand{\range}{\operatorname{ran}}
\newcommand{\dom}{\operatorname{dom}}

\newcommand{\frakc}{\mathfrak{c}}
\newcommand{\frakb}{\mathfrak{b}}
\newcommand{\frakd}{\mathfrak{d}}

\newcommand{\Pow}{\mathcal{P}}
\newcommand{\non}{\operatorname{non}}

\newcommand{\add}{\operatorname{add}}

\newcommand{\nul}{\mathcal{N}}
\newcommand{\meager}{\mathcal{M}}
\newcommand{\cf}{\operatorname{cf}}

\newcommand{\scrA}{\mathcal{A}}

\newcommand{\ZFC}{\mathrm{ZFC}}

\newcommand{\fraks}{\mathfrak{s}}
\newcommand{\frakr}{\mathfrak{r}}
\newcommand{\sI}{\mathfrak{s}_\mathrm{game}^{\mathrm{I}}}
\newcommand{\sII}{\mathfrak{s}_\mathrm{game}^{\mathrm{II}}}

\newcommand{\sIstar}{\mathfrak{s}_\mathrm{game^\ast}^{\mathrm{I}}}
\newcommand{\sIIstar}{\mathfrak{s}_\mathrm{game^\ast}^{\mathrm{II}}}

\newcommand{\append}{{}^\frown}

\newcommand{\zero}{\mymathbb{0}}
\newcommand{\one}{\mymathbb{1}}

\newcommand{\seq}[1]{{\langle#1\rangle}}

\DeclarePairedDelimiter\abs{\lvert}{\rvert}
\DeclarePairedDelimiterX{\norm}[1]{\lVert}{\rVert}{#1}

\theoremstyle{definition}
\newtheorem{thm}{Theorem}[section]
\newtheorem*{thm*}{Theorem}
\newtheorem{defi}[thm]{Definition}
\newtheorem*{defi*}{Definition}
\newtheorem{lem}[thm]{Lemma}
\newtheorem*{lem*}{Lemma}
\newtheorem{fact}[thm]{Fact}
\newtheorem*{fact*}{Fact}

\newtheorem{rmk}[thm]{Remark}
\newtheorem*{rmk*}{Remark}
\newtheorem{cor}[thm]{Corollary}
\newtheorem*{cor*}{Corollary}
\newtheorem*{convention*}{Convention}
\newtheorem{question}{Question}

\renewcommand{\P}{\mathbb{P}}
\newcommand{\pst}{\mathbb{P}^*}
\newcommand{\pstt}{\mathbb{P}^{**}}
\newcommand{\qd}{\dot{\mathbb{Q}}}
\renewcommand{\a}{\alpha}

\newcommand{\on}{\mathpunct{\upharpoonright}}
\newcommand{\ooo}{[{\omega}]^\omega}
\DeclareMathOperator{\cfi}{cf}
\newcommand{\str}{\mathrm{Str}}
\newcommand{\rp}{\sqsubset^\mathrm{r}}
\newcommand{\rsp}{\sqsubset^\mathrm{s}}
\newcommand{\rs}{\mathbf{R}^*}
\newcommand{\rss}{\mathbf{R}^{**}}
\newcommand{\fstr}{\mathrm{FinStr}}
\newcommand{\tri}{\triangleleft_{*}}
\newcommand{\trii}{\triangleleft_{**}}

\newcommand{\nonm}{\non(\mathcal{M})}

\newcommand{\nonn}{\non(\mathcal{N})}

\newcommand{\sIwstar}{\mathfrak{s}_{\mathrm{game^{\ast\ast}}}^\mathrm{I}}
\newtheorem{mainlem}[thm]{Main Lemma}
\newcommand{\sqtwo}{2^{<\omega}}

\newtheorem{subque}[thm]{Question}

\newcommand{\sIIwstar}{\mathfrak{s}_{\mathrm{game^{\ast\ast}}}^\mathrm{II}}

\newcounter{enuAlph}

\newtheorem{teorema}[enuAlph]{Theorem}

\usepackage[labelfont=bf, labelsep=space, font=small]{caption}

\title{Game-theoretic variants of splitting number}

\author{Jorge Antonio Cruz Chapital}
\affil{Department of Mathematics, University of Toronto, 27 King's College Cir, Toronto, Canada. E-mail: chapi@matmor.unam.mx}

\author{Tatsuya Goto\thanks{Supported by JSPS KAKENHI Grant Number JP22J20021}}
\affil{Graduate School of System Informatics, Kobe University, 1-1 Rokkodai, Nada-ku, 657-8501 Kobe, Japan. E-mail: 202x603x@stu.kobe-u.ac.jp}

\author{Yusuke Hayashi\thanks{Supported by JST SPRING, Japan Grant Number JPMJSP2148}}
\affil{Graduate School of System Informatics, Kobe University, 1-1 Rokkodai, Nada-ku, 657-8501 Kobe, Japan. E-mail: 219x504x@stu.kobe-u.ac.jp}

\author{Takashi Yamazoe\thanks{Supported by JST SPRING, Japan Grant Number JPMJSP2148}}
\affil{Graduate School of System Informatics, Kobe University, 1-1 Rokkodai, Nada-ku, 657-8501 Kobe, Japan. E-mail: 212x502x@stu.kobe-u.ac.jp}

\date{\today}

\begin{document}
	\maketitle



    \begin{abstract}
        We consider combining the definition of a cardinal invariant and the notion of an infinite game. We focus on the splitting number $\mathfrak{s}$ since the corresponding cardinal invariants behave in an interesting way. We introduce three kinds of games as reasonable realizations of the combination of the notions of splitting and infinite games. Then, we consider two cardinal invariants for each game, so we define six numbers. We prove that three of them are equal to the size of the continuum $\mathfrak{c}$ and one of them is equal to the $\sigma$-splitting number $\mathfrak{s}_\sigma$, which is defined as the minimum size of a $\sigma$-splitting family. On the other hand, we show that the remaining two numbers are consistently different from $\mathfrak{c}$, $\mathfrak{s}$ and $\mathfrak{s}_\sigma$. Moreover, though the two numbers share almost the same rule of the game, we prove that they can take distinct values from each other, and hence the slight difference of the rule is actually crucial in this sense.
    \end{abstract}

    \section*{Acknowledgements}

    The authors are grateful to J\"{o}rg Brendle, Osvaldo Guzm\'an and Michael Hru\v{s}\'ak for their helpful comments.
    Remark \ref{rmk:preservingssigma} is based on a suggestion from Diego A. Mejía.
    This work was supported by JSPS KAKENHI Grant Number JP22J20021 and JST SPRING, Japan Grant Number JPMJSP2148.

    \section{Introduction}\label{sec:intro}


    Let us consider combining the definition of a classical cardinal invariant and the notion of an infinite game.
    Game-theoretic considerations of cardinal invariants can be found in \cite{kada2000more}, \cite{brendle2019construction}, and \cite{meagersetsinfinite}, but our approach differs from these.
    The first three authors (Cruz Chapital, Goto, Hayashi) studied game-theoretic variants of some cardinal invariants, such as $\frakb$, $\frakd$, $\frakr$ and $\add(\nul)$, and they will present these results in a forthcoming paper.
    In this paper, along the way of that paper, we focus on the splitting number $\mathfrak{s}$ since the corresponding cardinal invariants 
behave in an interesting way.
We introduce three kinds of games of length $\omega$ played by Player I and Player II, as reasonable realizations of the combination of the notions of splitting and games.
Then, we consider two cardinal invariants for each game, so we define $3\times2=6$ numbers as follows: 

Fix a set $\scrA \subseteq \Pow(\omega)$.
	We call the game indicated by the following Table \ref{table:splittinggame} the \textit{splitting game} with respect to $\scrA$.
	
	\begin{table}[H]
		\caption{The splitting game}
		\centering
		\begin{tabular}{l|llllll}\label{table:splittinggame}
			Player I  & $n_0$ &       & $n_1$ &       & $\dots$ &     \\  \hline
			Player II &       & $i_0$ &       & $i_1$ &         & $\dots$
		\end{tabular}
	\end{table}
	
	Here, $n_0 < n_1 < n_2 < \dots < n_k < \dots$ are increasing numbers in $\omega$, $i_k$ ($k \in \omega$) are elements in $2$. 
	Player II wins when Player II played each of $0$ and $1$ infinitely often and there is $A \in \scrA$ such that 
	\begin{align}
		\{ n_k : k \in \omega \} \cap A = \{ n_k : k \in \omega \text{ and } i_k = 1\}. \label{eq:splittinggame} \tag{$*$}
	\end{align}

	Fix a set $\scrA \subseteq \Pow(\omega)$.
	We call the game indicated by the following Table \ref{table:splittingstargame} the \textit{splitting* game} with respect to $\scrA$.
	
	\begin{table}[H]
		\caption{The splitting* game}
		\centering
		\begin{tabular}{l|llllll}\label{table:splittingstargame}
			Player I  & $i_0$ &       & $i_1$ &       & $\dots$ &     \\  \hline
			Player II &       & $j_0$ &       & $j_1$ &         & $\dots$
		\end{tabular}
	\end{table}
	
	Here, $\seq{i_k : k  \in \omega}$ and $\seq{j_k : k  \in \omega}$ are in $2^\omega$. 
	Player II wins when either Player I did not play $1$ infinitely often or
	\begin{align*}
		\{ k \in \omega : j_k = 1 \} \in \scrA \text{ and  splits } \{ k \in \omega : i_k = 1\}. 
	\end{align*}

        In the splitting* game, when both $\{ k \in \omega : j_k = 1 \}$ and $\{ k \in \omega : i_k = 1 \}$ are finite, 
        the winner is defined as Player II, which is not necessarily obvious. Hence, we just define another game by switching the winner in this case. Namely,
        the \textit{splitting** game} with respect to $\scrA$ follows the same rule as the splitting* game,
        but only the winning condition of Player II is replaced by:
        Player II wins when \underline{either} Player I did not play $1$ infinitely often \textit{and Player II did}, \underline{or}
	\begin{align*}
		\{ k \in \omega : j_k = 1 \} \in \scrA \text{ and  splits } \{ k \in \omega : i_k = 1\}. 
	\end{align*}

	Excluding apparently trivial or undefined cases, we define the following six cardinal invariants using the corresponding games:
	 \begin{align*}
	 	\sI &= \min \{ \abs{\scrA} : \text{Player I has no winning strategy} \\
         & \hspace{15ex} \text{for the splitting game with respect to } \scrA \}, \text{ and} \\
	 	\sII &= \min \{ \abs{\scrA} : \text{Player II has a winning strategy} \\
         & \hspace{15ex} \text{for the splitting game with respect to } \scrA \}. \\
	 	\sIstar &= \min \{ \abs{\scrA} : \text{Player I has no winning strategy} \\
         & \hspace{15ex} \text{for the splitting* game with respect to } \scrA \}, \text{ and} \\
	 	\sIIstar &= \min \{ \abs{\scrA} : \text{Player II has a winning strategy} \\
         & \hspace{15ex} \text{for the splitting* game with respect to } \scrA \}.\\
         \sIwstar &= \min \{ \abs{\scrA} : \text{Player I has no winning strategy} \\
         & \hspace{15ex} \text{for the splitting** game with respect to } \scrA \}, \text{ and} \\
	 	\sIIwstar &= \min \{ \abs{\scrA} : \text{Player II has a winning strategy} \\
         & \hspace{15ex} \text{for the splitting** game with respect to } \scrA \}.
	 \end{align*}

     In Section \ref{sec:ZFC}, we show $\ZFC$ results on these numbers. In particular, we prove that some of them are equal to classical cardinal invariants: First, all the ``$\fraks^{\mathrm{II}}$-numbers'' are equal to the size of the continuum $\frakc$:
     \begin{teorema}[Theorem \ref{thm:sii}]
    $\sII=\sIIstar=\sIIwstar=\mathfrak{c}$ holds.
     \end{teorema}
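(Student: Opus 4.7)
The plan is to prove each equality as $\leq \mathfrak{c}$ and $\geq \mathfrak{c}$. For the upper bounds, I take $\scrA = \Pow(\omega)$ and exhibit explicit winning strategies for Player II. In the splitting game, II alternates $i_k = k \bmod 2$; then $A := \{n_{2\ell+1} : \ell \in \omega\}$ lies in $\scrA$ and satisfies $A \cap \{n_k : k \in \omega\} = \{n_k : i_k = 1\}$, with both $0$ and $1$ occurring infinitely often. In the splitting* game, II plays $j_k = 1$ at exactly the odd-indexed $1$-moves of Player I and $j_k = 0$ otherwise; whenever Player I plays $1$ infinitely often, $\{k : j_k = 1\}$ is an infinite co-infinite subset of $\{k : i_k = 1\}$ and hence splits it. A minor modification (forcing $j_k = 1$ also at even positions where $i_k = 0$) handles the splitting** game, covering the edge case where Player I plays $1$ only finitely often but II must still play $1$ infinitely often.

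For $\sIIstar \geq \mathfrak{c}$ (and similarly $\sIIwstar$), fix a winning strategy $\sigma$ for Player II and consider the continuous map $\tau \colon 2^\omega \to 2^\omega$ defined by $\tau(x)(k) = \sigma(x(0), \dots, x(k))$. The winning condition forces that for every $x \in 2^\omega$ with $x^{-1}(\{1\})$ infinite, the set $\tau(x)^{-1}(\{1\})$ lies in $\scrA$ and splits $x^{-1}(\{1\})$. Since $\tau(2^\omega)$ is a continuous image of a compact space, it is closed in $2^\omega$, so by the Cantor--Bendixson theorem its cardinality is either at most $\aleph_0$ or exactly $\mathfrak{c}$. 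In the countable case, the corresponding family $\{\tau(x)^{-1}(\{1\}) : x \in 2^\omega\}$ would split every infinite $X \subseteq \omega$ (take $x$ to be the characteristic function of $X$), contradicting $\mathfrak{s} \geq \aleph_1$. Hence $|\tau(2^\omega)| = \mathfrak{c}$, and removing the countably many $x$ with finite support still yields $|\scrA| \geq \mathfrak{c}$.

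For $\sII \geq \mathfrak{c}$, the analogous step produces a continuous selector $\Psi \colon [\omega]^\omega \to \Pow(\omega)$ with $\Psi(N) \subseteq N$ and $\Psi(N), N \setminus \Psi(N)$ both infinite, together with witnesses $A_N \in \scrA$ satisfying $A_N \cap N = \Psi(N)$. To pass from the image of $\Psi$ (which is an analytic splitting family, hence of size $\mathfrak{c}$ by the perfect set theorem for analytic sets) to a bound on $|\scrA|$ itself, I would restrict to plays $N_x = \{2k + x(k) : k \in \omega\}$ parameterized by $x \in 2^\omega$ and consider the closed sets $W_A := \{x \in 2^\omega : A \cap N_x = \Psi(N_x)\}$. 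Since $2^\omega$ equals $\bigcup_{A \in \scrA} W_A$ modulo a negligible set, if $|\scrA| < \mathfrak{c}$ then some $W_A$ has size $\mathfrak{c}$ and hence contains a perfect set $P$; a fusion argument perturbing $P$-branches then produces $\mathfrak{c}$ many plays requiring distinct witnesses, a contradiction. The main obstacle is this last step: unlike in the splitting* case, where the witness is exactly the II-selected set, here $A_N$ need only agree with $\Psi(N)$ on $N$, granting a single $A$ enough freedom to witness many plays, so producing distinct witnesses requires a careful perturbation exploiting the continuity of $\Psi$.
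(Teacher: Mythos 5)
Your upper bounds and your Cantor--Bendixson argument for $\sIIstar\ge\frakc$ and $\sIIwstar\ge\frakc$ are correct, and for those two invariants your route is genuinely different from (and slicker than) the paper's, which instead deduces them from $\sII\ge\frakc$ via the monotonicity $\sII\le\sIIstar\le\sIIwstar$. What your argument exploits is that in the splitting* and splitting** games the winning condition forces the witness to be exactly Player II's own play, so the set of witnesses is a continuous image of the compact space $2^\omega$, hence closed, and the perfect set property together with $\fraks>\aleph_0$ finishes.

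But precisely because of that monotonicity, the whole theorem reduces to the single inequality $\sII\ge\frakc$, and that is where your proof has a genuine, self-acknowledged gap. The plan of restricting to plays $N_x=\{2k+x(k):k\in\omega\}$, setting $W_A=\{x: A\cap N_x=\Psi(N_x)\}$, and deriving a contradiction from some $W_A$ containing a perfect set cannot work as stated: a single $A$ can witness a perfect set of such plays, indeed all of them. For instance, if the strategy answers $i_k=k\bmod 2$ irrespective of Player I's moves, then $A=\bigcup_{k\ \mathrm{odd}}\{2k,2k+1\}$ satisfies $A\cap N_x=\Psi(N_x)$ for every $x\in 2^\omega$, so $W_A=2^\omega$ and no contradiction is available from inside the family $\{N_x:x\in 2^\omega\}$. (Such a strategy is of course not winning, but to see that one must let Player I play sets outside that family.) The paper obtains $\frakc$ many distinct witnesses by a dichotomy on the strategy tree itself: either beyond some position the strategy's answer to a move $m$ depends only on $m$ and not on the history, in which case Player I can confine his moves to an infinite set of values receiving a constant answer and Player II then fails to play both bits infinitely often; or else at every position there is a move $m_s$ to which the strategy can be led to answer either $0$ or $1$, which yields a perfect tree of winning plays whose witnesses are pairwise distinct because condition $(*)$ pins down whether $m_s$ belongs to the witness. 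Some argument of this kind --- using Player I's freedom to choose arbitrary increasing sequences together with the requirement that Player II play both bits infinitely often --- is exactly what your sketch is missing, and the ``fusion/perturbation'' step as described does not supply it.
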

     Also, $\sI$ is equal to the following classical cardinal invariant related to $\sigma$-splitting families defined as follows:
     \begin{defi}\label{defi:sigma_splitting}
        A family $\scrA \subseteq \ooo$ is a $\sigma$-splitting family if for any function $f \colon\omega\to [\omega]^\omega$, there is $x \in \scrA$ such that $x$ splits every $f(n)$. The $\sigma$-splitting number $\fraks_\sigma$ is the minimum size of a $\sigma$-splitting family.
    \end{defi}
    \begin{teorema}[Theorem \ref{thm:sI_equals_fraks_sigma}]
        $\mathfrak{s}_\sigma=\sI$ holds.
    \end{teorema}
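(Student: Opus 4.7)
The plan is to prove both inequalities $\sI \le \mathfrak{s}_\sigma$ and $\sI \ge \mathfrak{s}_\sigma$ separately. For the forward direction, I let $\scrA$ be a $\sigma$-splitting family and show Player I has no winning strategy. Given any strategy $\sigma \colon 2^{<\omega} \to \omega$ of Player I (so that $n_k = \sigma(x \on k)$ when Player II plays $x \in 2^\omega$), I consider the countable family of infinite sets
\[
\{G^{s,i} : s \in 2^{<\omega},\ i \in 2\}, \qquad G^{s,i} = \{\sigma(s \append i^m) : m \in \omega\},
\]
recording Player I's moves along the branch that becomes constantly $i$ after $s$. By $\sigma$-splitting, some $A \in \scrA$ splits every $G^{s,i}$, and I have Player II play according to $A$ (i.e., $i_k = 1 \iff n_k \in A$), which makes the matching condition automatic. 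The resulting play $x_A$ cannot be eventually constant: if $x_A = s \append 0^\omega$, then $A \cap G^{s,0} = \emptyset$, contradicting that $A$ splits $G^{s,0}$; the case $s \append 1^\omega$ is symmetric, yielding $G^{s,1} \subseteq A$.

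For the reverse direction, I assume Player I has no winning strategy and show $\scrA$ is $\sigma$-splitting. Given $f \colon \omega \to [\omega]^\omega$, fix a surjection $L \colon \omega \to \omega$ taking each value infinitely often, and define a strategy $\sigma_f$ for Player I together with a counter $t \colon 2^{<\omega} \to \omega$ by simultaneous recursion: set $t(\emptyset) = 0$; given $t(s)$, set $t(s \append i) = t(s)+1$ iff both $0$ and $1$ appear among $\{s(k) : k < |s|,\ t(s \on k) = t(s)\} \cup \{i\}$ (the responses at rounds assigned to the current attempt), and $t(s \append i) = t(s)$ otherwise; and let $\sigma_f(s)$ be the smallest element of $f(L(t(s)))$ strictly greater than every $\sigma_f(s \on k)$ with $k < |s|$. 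Intuitively, attempt $t$ plays in $f(L(t))$ and lasts until Player II's responses at its rounds display both $0$ and $1$.

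Since Player I has no winning strategy, some $A \in \scrA$ produces a play $x_A$ that is not eventually constant; I claim $A$ splits every $f(n)$. The counter $t(x_A \on k)$ must tend to infinity: otherwise it stabilizes at some $t^*$, so cofinitely many rounds belong to attempt $t^*$ and the responses there avoid one of $\{0,1\}$ (else the attempt would complete), forcing $x_A$ to be eventually constant. Since $L$ visits each $n$ infinitely often and $t \to \infty$, each $n$ is visited by infinitely many completed attempts; each such attempt contributes at least one move in $A \cap f(n)$ (from its $1$-response) and one in $f(n) \setminus A$ (from its $0$-response). Distinct attempts play strictly increasing moves, so these contributions lie in disjoint subsets of $f(n)$, and both $A \cap f(n)$ and $f(n) \setminus A$ are infinite.

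The main difficulty is the strategy design in the reverse direction: the adaptive counter $t$ must be engineered so that ``every attempt eventually completes'' is exactly equivalent to ``$A$ splits every $f(n)$''. The delicate point is that a stalled attempt forces \emph{global} eventual constancy of $x_A$, not just constancy on that attempt's rounds; this works because once $t$ stabilizes, every subsequent round is assigned to the stalled attempt, whose failure to complete means its responses are eventually monochromatic.
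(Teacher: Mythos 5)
Your proof is correct and follows essentially the same route as the paper's: the forward direction diagonalizes against the countable family of Player I's move-sets along eventually-constant branches of a fixed strategy, and the reverse direction builds a strategy that cycles through the sets $f(n)$ (each visited infinitely often), advancing to the next set exactly when both responses $0$ and $1$ have occurred within the current block. Your explicit counter $t$ is a careful formalization of the paper's informal instruction to play elements of $f'(j)$ ``until Player II changes the value of play,'' and it correctly pins down the completion criterion as requiring both values \emph{within the current attempt}, which is precisely what makes the stalled-attempt argument yield global eventual constancy.
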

    We also prove other several $\ZFC$ results summarized in Fig. \ref{fig:cardinals}.

     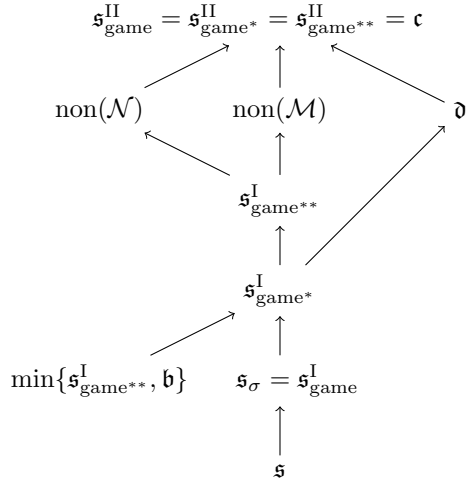
\begin{figure}[h]
   \centering
	\begin{tikzpicture}
		\tikzset{
			textnode/.style={text=black}, 
		}
		\tikzset{
			edge/.style={color=black, thin}, 
		}
		\tikzset{cross/.style={preaction={-,draw=white,line width=9pt}}}
		\newcommand{\w}{2.4}
		\newcommand{\h}{1.2}
		
		\node[textnode] (s) at (0,  0) {$\mathfrak{s}$};
		\node[textnode] (ssigma) at (0,  \h) {$~~~~\mathfrak{s}_\sigma=\sI$};
		\node[textnode] (sIstar) at (0,  2*\h) {$\sIstar$};
		\node[textnode] (sIwstar) at (0,  3*\h) {$\sIwstar$};
		\node[textnode] (nonm) at (0,  4*\h) {$\nonm$};
		\node[textnode] (nonn) at (-\w,  4*\h) {$\nonn$};
		\node[textnode] (d) at (\w,  4*\h) {$\frakd$};
		\node[textnode] (c) at (0,  5*\h) {$\sII=\sIIstar=\sIIwstar=\mathfrak{c}~~~~~$};
            \node[textnode] (min) at (-1*\w,  1*\h) {$\min\{\sIwstar,\frakb\}$};

	      \draw[->, edge] (s) to (ssigma);
	      \draw[->, edge] (ssigma) to (sIstar);
	      \draw[->, edge] (sIstar) to (sIwstar);
	      \draw[->, edge] (sIwstar) to (nonm);
	      \draw[->, edge] (sIwstar) to (nonn);
	      \draw[->, edge] (sIstar) to (d);
	      \draw[->, edge] (nonn) to (c);
	      \draw[->, edge] (nonm) to (c);
	      \draw[->, edge] (d) to (c);
	      \draw[->, edge] (min) to (sIstar);
	

	\end{tikzpicture}
	\caption{A diagram of cardinal invariants related to games of splitting}\label{fig:cardinals}
\end{figure}

In Section \ref{sec:consistency}, we study the remaining two numbers $\sIstar$ and $\sIwstar$, whose values are not decided in Section \ref{sec:ZFC}. We show that these numbers can be different from $\fraks$ and $\fraks_\sigma$. Moreover, at the same time they can take different values from each other:
\begin{teorema}[Theorem \ref{thm:separation_of_three}, Remark \ref{rmk:preservingssigma}]
    $\fraks=\fraks_\sigma<\sIstar<\sIwstar$ consistently holds.
\end{teorema}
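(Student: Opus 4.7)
The plan is to use an iteration of $\mathrm{ccc}$ forcings over a ground model of $\CH$ to obtain a model in which $\fraks=\fraks_\sigma=\aleph_1$, $\sIstar=\aleph_2$, and $\sIwstar=\aleph_3=\frakc$. Since $\fraks\le\fraks_\sigma=\sI\le\sIstar\le\sIwstar$ is $\ZFC$-provable (Figure~\ref{fig:cardinals}), it suffices to force the two strict inequalities while preserving a ground-model $\sigma$-splitting family; a natural framework is a two-dimensional matrix iteration in the style of Brendle--Fischer--Mej\'ia.

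The central new ingredient is an iterand $\mathbb{Q}^{**}_{\scrA}$ which, for a given family $\scrA$, generically adds a winning strategy for Player~I in the splitting$^{**}$ game against $\scrA$. The extra winning clause of splitting$^{**}$, namely ``Player~I plays finitely many $1$'s and so does Player~II'', gives Player~I the option of aiming at playing only finitely many $1$'s while coaxing Player~II into the same behaviour; $\mathbb{Q}^{**}_{\scrA}$ should realize such a strategy via generic finite approximations. Bookkeeping through all families of size $<\aleph_3$ appearing in the iteration and iterating $\mathbb{Q}^{**}_{\scrA}$ for each of them along the top row of the matrix yields $\sIwstar\ge\aleph_3$.

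To keep $\sIstar=\aleph_2$ one uses the middle row of the matrix to construct a family $\scrA_0$ of size $\aleph_2$ generic enough that no name for a Player~I winning strategy in the splitting$^{*}$ game can defeat $\scrA_0$. The crucial asymmetry is that in the splitting$^{*}$ rule Player~II wins outright whenever Player~I plays only finitely many $1$'s; hence the Player~I strategies added by the $\mathbb{Q}^{**}_{\scrA}$-iterands (which aim at playing finitely many $1$'s) are useless against $\scrA_0$ in the splitting$^{*}$ game. A standard reflection/density argument along the vertical direction of the matrix then rules out all other candidate strategies, delivering $\sIstar\le\aleph_2$; together with the bookkeeping for $\aleph_1$-sized families in the lower matrix levels one also obtains $\sIstar\ge\aleph_2$.

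Finally, preservation of $\fraks_\sigma=\aleph_1$ proceeds via a Mej\'ia-style preservation theorem (cf.\ Remark~\ref{rmk:preservingssigma}): one verifies that each iterand preserves $\sigma$-splitting families, and invokes that finite-support $\mathrm{ccc}$ (matrix) iterations of such forcings preserve the property. The main obstacle is precisely this verification for $\mathbb{Q}^{**}_{\scrA}$, together with its $\mathrm{ccc}$-ness and, most delicately, the fact that it does not accidentally add a Player~I winning strategy in the splitting$^{*}$ game against $\scrA_0$; the resolution should rest entirely on the combinatorial distinction between the two games highlighted in the second paragraph.
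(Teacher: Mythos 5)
Your overall architecture (a ccc iteration adding generic Player I strategies for the splitting** game cofinally often, while keeping the splitting* invariant small and preserving a splitting family) matches the paper's, but there is a genuine gap at the central point, and the combinatorial mechanism you describe for the new iterand is not the one that can work. A Player I strategy that ``aims at playing only finitely many $1$'s'' loses the splitting** game against every $y$ with $y^{-1}(\{1\})$ infinite (that cell of Table \ref{table:winner**} is a win for II), and a splitting family consists of such $y$; so this cannot be how $\mathbb{Q}^{**}_{\scrA}$ wins. The paper's poset $\pstt$ instead forces the generic strategy to eventually play $1$ only at positions where Player II plays $1$ (the relation $y\rp_0\sigma_G$), so that II's set fails to split I's infinite set. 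The ``both finite'' clause matters only for the degenerate plays $y\in\zero$, and its sole structural role is that $\pstt$ may carry side conditions from \emph{all} of $2^\omega$, whereas the splitting* analogue $\pst$ must restrict its side conditions to $2^\omega\setminus\zero$.

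That distinction is exactly what your third paragraph needs and does not supply. The hard step is not that the \emph{specific} generic strategies are useless for the splitting* game; it is that \emph{no} name for a winning splitting* strategy appears anywhere in the iteration, and your ``standard reflection/density argument'' is precisely the non-standard part. The paper proves it with Fr-limits (ultrafilter limits): $\pstt$ is $\sigma$-Fr-linked because $2^\omega$ is compact, so $D$-limits of the finite side conditions exist, while $2^\omega\setminus\zero$ is not compact and $\pst$ can only be used in pieces of size $<\kappa$; the Main Lemma then shows that any $<\kappa$-Fr-iteration whose first $\kappa$ iterands are Cohen forces $\sIstar\le\kappa$, via the characterization $\sIstar=\frakb(\rs_\infty)$ and a limit condition that drives any putative strategy in $\str_\infty$ to answer $0$ along an eventually constant play of II. Without this linkedness/compactness input (or a substitute preservation theorem) your claim $\sIstar\le\aleph_2$ is unsupported, and the lower bound $\sIstar\ge\aleph_2$ also needs the small subforcings of $\pst$ to be admissible iterands, which is where the $\aleph_1$-inaccessibility hypothesis enters. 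Your last paragraph on preserving $\fraks=\fraks_\sigma$ via a Suslin-ccc preservation theorem is in line with what the paper actually does.
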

Recall that both kinds of games corresponding to $\sIstar$ and $\sIwstar$ share almost the same rule: Both players are \textit{basically} trying to construct an infinite subset of $\omega$ and Player II wins if Player I's infinite set is split by Player II's. If exactly one player breaks this basic rule, that is, the player plays a finite set, the winner is the other one. The difference of the two games only appears when both players play finite sets 
(see Table \ref{table:winner*}, \ref{table:winner**}).

\begin{table}[h]
	\begin{minipage}[h]{0.49\textwidth}
		\caption{The winner of splitting* games}\label{table:winner*}
            \vspace{-3pt}
		\centering
		\begin{tabular}{c|c|c}
        \diagbox{II}{I} & Infinite &  Finite   \\  
			\hline
			Infinite &      (Split or not) & II \\
			\hline
			Finite &      I & \textcolor{black}{II} 
		\end{tabular}
		
	\end{minipage}
	\begin{minipage}[h]{0.49\textwidth}
		\caption{The winner of splitting** games}\label{table:winner**}
            \vspace{-3pt}
		\centering
		\begin{tabular}{c|c|c}
			\diagbox{II}{I} & Infinite &  Finite   \\  
			\hline
			Infinite &      (Split or not) & II \\
			\hline
			Finite &      I & \textcolor{black}{I}
		\end{tabular}

	\end{minipage}
\end{table}

However, the theorem above shows that this slight difference is crucial in the sense that the corresponding cardinal invariants $\sIstar$ and $\sIwstar$ can take distinct values.
To enjoy this result more, let us consider the following situation: Assume $\sIstar<\sIwstar$ and $\scrA\subseteq \mathcal{P}(\omega)$ is a witness of $\sIstar$. Since $|\scrA|=\sIstar<\sIwstar$, there is a winning strategy $\sigma$ for the \textit{splitting**} game with respect to $\scrA$. At the same time, $\scrA$ witnesses $\sIstar$, so there should be a play $y^*\in \scrA$ of II winning against $\sigma$ in the \textit{splitting*} game. However, whenever II follows the basic rule of this game, i.e., plays $1$ infinitely many times, 
the rules of the two games are the same, so II cannot find a winning play in $\scrA$ against $\sigma$. Therefore, the winning play $y^*$ cannot follow this basic rule. That is, if you($=$ Player II) want to win this splitting* game in this situation ($F$ and $\sigma$ are already fixed and you are choosing a play $y\in \scrA$), you should \textit{intentionally} break the basic rule, namely, play only $0$ after some point. Then, the opponent, who follows the strategy $\sigma$ winning against all the \textit{normal} plays in $\scrA$, will somehow end up playing only $0$ eventually and breaking the rule as well, and finally you will be judged as the winner, in this current rule set, the splitting* game. 

We also consider the background of this consistency result from the perspective of corresponding forcing posets and at the end of Section \ref{sec:consistency} we obtain a reasonable explanation to some extent of why this separation $\sIstar<\sIwstar$ consistently holds.

    In the rest of this section, we fix our notation.

    $(\forall^\infty n)$ and $(\exists^\infty n)$ are abbreviations to say ``for all but finitely many $n$" and ``there exist infinitely many $n$", respectively.

    For $m\leq\omega$ and $j<2$, $\seq{j}^m$ denotes the sequence of length $m$ whose values are all $j$. Namely, $\seq{j}^m=m\times\{j\}$.

    $\zero$ and $\one$ denote the set of all eventually $0$ and $1$ sequences of length $\omega$, respectively.

    $\nul$ and $\meager$ denote the Lebesgue null ideal and Baire first category ideal, respectively.
    As for the definitions of classical cardinal invariants (such as $\fraks$, $\frakb$, $\frakd$, $\mathfrak{p}$, $\non(\nul)$, $\non(\meager)$ and $\add(\nul)$), we refer the reader to \cite{blass2010combinatorial}.

	We also recall the notion of interval partitions. Let $\mathsf{IP}$ be the set of all interval partitions of $\omega$.
	For $\bar{I}=\langle I_n:n<\omega\rangle$ and $ \bar{J}=\langle J_m:m<\omega\rangle \in \mathsf{IP}$, we define
	\[
	\bar{I} \leq^* \bar{J} :\Leftrightarrow  (\forall^\infty m)(\exists n)(I_n \subseteq J_m).
	\]

    For a relational system $\mathbf{R} = \langle X, Y, R\rangle$, we use the following standard notation:
    \begin{align*}
    \frakb(\mathbf{R}) &= \min \{ \abs{A} : A \subseteq X \text{ an $\mathbf{R}$-unbounded family}\}, \\
    \frakd(\mathbf{R}) &= \min \{ \abs{B} : B \subseteq Y \text{ an $\mathbf{R}$-dominating family}\}.
    \end{align*}
    
    \section{ZFC results}\label{sec:ZFC}
 


    In this section, we prove all the relationships described in Figure \ref{fig:cardinals}.
    First, for Player II, the splitting** game is the hardest, the splitting* game is next and the splitting game is the easiest. More precisely,

    \begin{lem}\label{lem:spandspstar}
        \begin{enumerate}
            \item If Player II has a winning strategy for the splitting**[splitting*] game with respect to $\scrA$, then Player II has a winning strategy for the splitting*[splitting] game with respect to $\scrA$, respectively.
            \item If Player I has a winning strategy for the splitting[splitting*] game with respect to $\scrA$, then Player I has a winning strategy for the splitting*[splitting**] game with respect to $\scrA$, respectively.
        \end{enumerate}
    \end{lem}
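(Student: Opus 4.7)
The plan is to handle the four implications separately, grouping them into a ``format-preserving'' pair (splitting** $\Rightarrow$ splitting* for II, and splitting* $\Rightarrow$ splitting** for I) and a ``cross-format'' pair (splitting* $\Rightarrow$ splitting for II, and splitting $\Rightarrow$ splitting* for I). For the format-preserving pair, the same strategy should transfer: the splitting* and splitting** winning conditions differ only in the ``both players play $1$ finitely often'' cell of Tables \ref{table:winner*}--\ref{table:winner**}, so moving II's condition from splitting** to splitting* only grants II additional wins in that cell, and moving I's condition from splitting* to splitting** only grants I additional wins there. Concretely, a winning II-strategy for splitting** also wins splitting* because the latter is strictly weaker for II; dually, a winning I-strategy $\tau^\ast$ for splitting* forces I to play $1$ infinitely often against every II-response (otherwise II wins splitting* immediately), and this already suffices to win splitting**.

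For the cross-format pair, I set up a dictionary identifying splitting-plays with splitting*-plays: put $a_m := 1$ iff $m \in N := \{n_k : k < \omega\}$, and set $i_k := b_{n_k}$. Under this translation, $\{n_k : i_k = 1\} = N \cap C$ where $C := \{m : b_m = 1\}$, so an $\scrA$-witness $A$ in the splitting game can be taken as $C$; likewise, ``II plays both $0$ and $1$ infinitely often'' in splitting corresponds exactly to ``$C$ splits $N$'' in splitting*.

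Armed with this dictionary, to prove splitting* $\Rightarrow$ splitting for II, I convert $\sigma^\ast$ into a splitting-strategy $\sigma$ by running a splitting*-simulation in parallel: each time Player I plays $n_k$, I extend I's simulated bit-sequence by $0$'s and a final $1$ across positions $n_{k-1}+1, \dots, n_k$, let $\sigma^\ast$ produce the corresponding $b$'s, and then have II play $b_{n_k}$ in splitting. Since the simulated $B = N$ is infinite, the winning of $\sigma^\ast$ must come through the non-trivial clause, yielding $C \in \scrA$ that splits $N$; the dictionary then gives a win for II in splitting. For splitting $\Rightarrow$ splitting* for I, I define $\tau^\ast$ iteratively: after the $k$-th recorded move point, I compute $n_k := \tau(b_{n_0}, \dots, b_{n_{k-1}})$ and play $a_m = 0$ until $m = n_k$, at which point I play $a_m = 1$ and advance $k$. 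Because $\tau$ always returns a finite natural, Player I plays $1$ infinitely often, so II can only win splitting* through the non-trivial clause; but the dictionary would then supply $A := C$ witnessing II's win against $\tau$ in splitting, contradicting that $\tau$ is winning.

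No serious technical obstacle is expected. The main bookkeeping is simply to check that the induced strategies are genuine strategies---they depend only on past moves---which is immediate because both $\sigma^\ast$ and $\tau$ use only past moves, and both simulations process Player I's moves in order.
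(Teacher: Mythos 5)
The paper explicitly omits the proof of this lemma as routine, so there is no official argument to compare against; your proposal is correct and supplies exactly the verification the authors presumably had in mind. The pointwise monotonicity of the winning conditions (II's splitting** condition implies II's splitting* condition, and dually for I) disposes of the splitting*/splitting** pair, and your dictionary---simulating Player I's set $N=\{n_k:k<\omega\}$ by its characteristic function and reading off Player II's bits at the positions in $N$, so that the $\scrA$-witness can be taken to be II's set $C$ and ``II plays both bits infinitely often'' becomes ``$C$ splits $N$''---correctly transfers strategies between the splitting and splitting* formats, including the needed check that the induced strategies depend only on past moves.
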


    We omit the proof of this lemma. Moreover, we can easily show that all the numbers defined in Section \ref{sec:intro} are well-defined and hence $\sI \le \sIstar \le \sIwstar$ and $\sII \le \sIIstar \le \sIIwstar$ hold by the lemma.


	\begin{thm}\label{thm:sI_equals_fraks_sigma}
		$\sI = \fraks_\sigma$ holds.
	\end{thm}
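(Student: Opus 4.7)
I prove the two inequalities $\sI \le \fraks_\sigma$ and $\sI \ge \fraks_\sigma$ separately.

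For $\sI \le \fraks_\sigma$, let $\scrA$ be a $\sigma$-splitting family witnessing $\fraks_\sigma$; I show that Player I has no winning strategy with respect to $\scrA$. Fix any strategy $\sigma \colon 2^{<\omega} \to \omega$ of Player I (viewing a strategy as the map sending Player II's bit history to Player I's next move). For each $s \in 2^{<\omega}$ and $i \in 2$, define the infinite set
\[
F_i(s) := \{\sigma(s \append \seq{i}^j) : j \in \omega\}
\]
of Player I's responses along the constant-$i$ extension of $s$. The family $\{F_i(s) : s \in 2^{<\omega}, i \in 2\}$ is countable, so by $\sigma$-splitting some $A \in \scrA$ splits every $F_i(s)$. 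Define Player II's play $y \in 2^\omega$ recursively by $y(k) := \chi_A(\sigma(y \on k))$; the matching condition $\{n_k : y(k) = 1\} = A \cap \{n_k : k \in \omega\}$ then holds by construction. If $y$ were eventually $0$ from some step $k_0$, then with $s := y \on k_0$ we would have $F_0(s) = \{\sigma(y \on k) : k \ge k_0\} \subseteq \omega \setminus A$, contradicting that $A$ splits $F_0(s)$; the eventually-$1$ case is symmetric via $F_1$. Hence $y$ has infinitely many $0$s and $1$s and is a Player II winning play against $\sigma$.

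For $\sI \ge \fraks_\sigma$, I argue contrapositively: assume $|\scrA| < \fraks_\sigma$ and construct a winning strategy for Player I. Since $\scrA$ fails to be $\sigma$-splitting, fix $f \colon \omega \to [\omega]^\omega$ and a choice $A \mapsto n_A$ such that each $A \in \scrA$ fails to split $f(n_A)$. The strategy $\sigma$ is to be built so that, for every $A \in \scrA$, the canonical Player II response $y^A \in 2^\omega$ defined recursively by $y^A(k) := \chi_A(\sigma(y^A \on k))$ is eventually constant, which forces Player II to lose. Concretely, $\sigma(s)$ will always lie in $f(n(s))$ for a labeling $n \colon 2^{<\omega} \to \omega$ designed so that along each canonical response $y^A$ the label eventually stabilizes at some $n_A^*$ for which $A$ fails to split $f(n_A^*)$; then $\sigma(y^A \on k) \in f(n_A^*)$ eventually lies in $A$ or in $\omega \setminus A$ depending on the type of non-splitting, so $y^A$ is eventually $1$ or $0$.

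The main obstacle is constructing such a labeling $n$ compatible with every $A \in \scrA$ simultaneously, even if $\scrA$ is uncountable. The countable partition
\[
\scrA = \bigsqcup_{(n,\epsilon) \in \omega \times \{+,-\}} \scrA_n^\epsilon,
\]
where $\scrA_n^+$ (resp.\ $\scrA_n^-$) collects those $A$ with $n_A = n$ and $f(n) \setminus A$ (resp.\ $f(n) \cap A$) finite, reduces the task to countably many uniform cases, since all $A \in \scrA_n^\epsilon$ relate to $f(n)$ in the same way. I would handle these classes through a priority-style bookkeeping, reserving for each class a family of cones in $2^{<\omega}$ on which $\sigma$ enumerates $f(n)$, and arranging that the canonical response $y^A$ eventually enters a cone assigned to the class containing $A$ and becomes constant there.
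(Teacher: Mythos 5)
Your first direction ($\sI \le \fraks_\sigma$) is correct and is essentially the paper's own argument: your countable family $\{F_i(s) : s \in 2^{<\omega},\ i \in 2\}$ is just a reindexing of the paper's family $\{\{\sigma(\bar{i} \on k) : k \in \omega\} : \bar{i} \in \zero \cup \one\}$, and the recursive definition of $y$ and the contradiction with eventual constancy are identical in substance.

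The second direction has a genuine gap. You correctly reduce the problem to building a single strategy $\sigma$ such that for every $A \in \scrA$ the canonical response $y^A$ (with $y^A(k) = \chi_A(\sigma(y^A \on k))$) is eventually constant, and the partition into classes $\scrA_n^\epsilon$ is a sensible bookkeeping unit. But the entire content of this direction is the mechanism by which $\sigma$ --- which sees only Player II's bits and cannot identify $A$ or its class --- forces $y^A$ into a region where it stabilizes, and you defer exactly this to an unspecified ``priority-style bookkeeping.'' As stated, the plan meets a real obstruction: if a cone is reserved for the class $(n,\epsilon)$ and $\sigma$ enumerates $f(n)$ there, then any $A'$ from a \emph{different} class whose canonical response enters that cone may well split $f(n)$, so $y^{A'}$ oscillates forever inside the cone and never reaches a cone of its own class; you need a rule for when to abandon $f(n)$ and move on, and that rule is the heart of the proof. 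The paper resolves this by inverting your plan: instead of trying to make the label stabilize along $y^A$, its strategy cycles through an enumeration $f'$ of $\range(f)$ in which every set recurs infinitely often, advancing from $f'(n)$ to $f'(n+1)$ precisely when Player II changes the value of play. No commitment to a single $n_A^*$ is ever made; rather, if some $y^A$ changed value infinitely often, the play would pass through every $f(m)$ infinitely often and place played points of each $f(m)$ on both sides of $A$, so $A$ would split all of $\range(f)$, contradicting the choice of $f$. Hence every $y^A$ is eventually constant. To complete your proof you should either supply a concrete bookkeeping together with a verification that \emph{every} $y^A$ stabilizes, or switch to this argument that derives a contradiction from non-stabilization of the response.
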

	\begin{proof}
		First we prove $\sI \le \fraks_\sigma$.
		Fix a $\sigma$-splitting family $\mathcal{A} \subseteq [\omega]^\omega$.
		We want to show that Player I has no winning strategy for the splitting game with respect to $\mathcal{A}$.
		Fix a strategy $\sigma \colon 2^{<\omega} \to \omega$ of Player I.
		Since $\zero \cup \one$ is a countable set and $\mathcal{A}$ is a $\sigma$-splitting family, we can take $A \in \mathcal{A}$ such that $A$ splits all $\{ \sigma(\bar{i} \upharpoonright k) : k \in \omega \}$ for $\bar{i} \in \zero \cup \one$.
		
		We consider the following $\bar{i} \in 2^\omega$:
		\[
		i_k = \begin{cases} 1 & \text{(if } \sigma(\bar{i} \upharpoonright k) \in A \text{)} \\ 0 & \text{(otherwise)} \end{cases}.
		\]
		If $\bar{i} \in \zero \cup \one$, then $A$ splits $\{ \sigma(\bar{i} \upharpoonright k) : k \in \omega \}$ by the choice of $A$. But by the choice of $\bar{i}$, this means $\bar{i} \not \in \zero \cup \one$, which is a contradiction.
		So we have $\bar{i} \not \in \zero \cup \one$.
		This observation and the choice $\bar{i}$ imply $\bar{i}$ is a winning play of Player II against the strategy $\sigma$.
		So we have proved Player I has no winning strategy for the splitting game with respect to $\mathcal{A}$.

		Next, we prove $\fraks_\sigma \le \sI$.
		Fix a family $\mathcal{A} \subseteq \Pow(\omega)$ such that Player I has no winning strategy for the splitting game with respect to $\mathcal{A}$.
		We want to show that $\mathcal{A}$ is a $\sigma$-splitting family.
		Take $f \colon \omega \to [\omega]^\omega$.
		We shall find an $A \in \scrA$ such that $A$ splits $f(n)$ for every $n \in \omega$.
		Take $f' \colon \omega \to [\omega]^\omega$ such that $\range(f) = \range(f')$ and each element of $\range(f)$ appears in the range of $f'$ infinitely often.
		For $m, n \in \omega$, we let $f'(n)(m)$ denote the $m$-th element of $f'(n)$ in ascending order.
		
		Consider the following strategy $\sigma$ of Player I.
		First $\sigma$ plays $f'(0)(0)$. 

        From then on, $\sigma$ will play the elements of $f'(0)$ in turn until Player II changes the value of play. After that, $\sigma$ plays $f'(1)(k)$ next. Here $k$ is the smallest number such that $f'(1)(k)$ exceeds the natural number that $\sigma$ has played so far. Continue this process.
		
		Table \ref{table:situation} shows an example.
		
		\begin{table}[H]
    		\caption{An example of the situation}
			\centering
			\begin{tabular}{l|llllllllll}\label{table:situation}
				Player I  & $f'(0)(0)$ & & $f'(0)(1)$ & & $f'(0)(2)$ & & $f'(1)(k)$ & $\dots$ &     \\  \hline
				Player II & & $0$ & & $0$ & & 1 &  & $\dots$
			\end{tabular}
		\end{table}
		
		Since this $\sigma$ is not a winning strategy, there is $A \in \scrA$ and $\bar{i} \in 2^\omega \setminus (\zero \cup \one)$ such that the equation (\ref{eq:splittinggame}) holds for $n_k = \sigma(\bar{i} \upharpoonright k)$.
		This implies $A$ splits all elements in $\range(f)$ by the definition of $\sigma$.
	\end{proof}
	
	\begin{thm}\label{thm:sii}
		$\sII = \frakc$ holds.
	\end{thm}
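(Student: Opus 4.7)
The plan is to prove $\sII \le \frakc$ trivially and $\sII \ge \frakc$ by building a binary tree of Player I plays that forces $\scrA$ to contain $\frakc$ many distinct witnesses. For $\sII \le \frakc$, I would take $\scrA := [\omega]^\omega$ and exhibit the strategy in which Player II plays the alternating sequence $0, 1, 0, 1, \dots$ regardless of Player I's moves: $\bar{i}$ has both $0$ and $1$ infinitely often, and the set $A := \{n_k : k \text{ odd}\}$ lies in $\scrA$ and witnesses equation (\ref{eq:splittinggame}).

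For $\sII \ge \frakc$, fix a winning strategy $\tau$ for Player II with respect to $\scrA$. The key lemma I would prove is: \emph{for every finite increasing sequence $h \in \omega^{<\omega}$, there exist a natural number $m > \max h$ and finite extensions $h'_0, h'_1 \supseteq h$ with $\max h'_0, \max h'_1 < m$ such that $\tau(h'_0 \append \seq{m}) = 0$ and $\tau(h'_1 \append \seq{m}) = 1$}. Suppose the lemma fails at some $h$; then for every $m > \max h$ the value $\tau(h' \append \seq{m})$ is the same for all $h' \supseteq h$ with $\max h' < m$, so the set $\{m \in \omega : m > \max h\}$ partitions into $M_0 \cup M_1$ according to that constant value. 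One of them, say $M_0$, is infinite; Player I then extends $h$ by an increasing enumeration of $M_0$, and by induction each response of $\tau$ from position $|h|$ onward equals $0$ (at step $j+1$, the current history itself is a legitimate $h'$-witness for $m_{j+1} \in M_0$). Hence $\bar{i}$ is eventually constantly $0$, contradicting that $\tau$ is winning.

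Using this lemma I would build a full binary tree $(p_s)_{s \in 2^{<\omega}}$ of partial plays by recursion: set $p_\emptyset := \emptyset$, and given $p_s$ apply the lemma at $h = p_s$ to obtain $m_s$, $h'_0$, $h'_1$, then define $p_{s \append \seq{0}} := h'_0 \append \seq{m_s}$ and $p_{s \append \seq{1}} := h'_1 \append \seq{m_s}$. For each $b \in 2^\omega$, the union $\bar{n}^b := \bigcup_{k \in \omega} p_{b \upharpoonright k}$ is a valid infinite increasing Player I play; since $\tau$ is winning, there exists $A^b \in \scrA$ witnessing Player II's victory, and I fix one such choice.

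To see that $b \mapsto A^b$ is injective, suppose $b \ne b'$ first differ at $k_0$ and let $s_0 := b \upharpoonright k_0$. Both $\bar{n}^b$ and $\bar{n}^{b'}$ contain the common move $m_{s_0}$ (as the final entry of $p_{b \upharpoonright (k_0+1)}$ and of $p_{b' \upharpoonright (k_0+1)}$ respectively), but by construction $\tau$'s response to that move is $b(k_0)$ in $\bar{n}^b$ and $b'(k_0) = 1 - b(k_0)$ in $\bar{n}^{b'}$; equation (\ref{eq:splittinggame}) then forces $m_{s_0} \in A^b \Leftrightarrow b(k_0) = 1 \Leftrightarrow m_{s_0} \notin A^{b'}$, so $A^b \ne A^{b'}$. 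This gives an injection $2^\omega \hookrightarrow \scrA$, hence $|\scrA| \ge \frakc$. The main obstacle is the key lemma, which crucially uses the ``both $0$ and $1$ infinitely often'' clause of Player II's winning condition.
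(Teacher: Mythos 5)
Your proposal is correct and takes essentially the same approach as the paper: your key lemma is exactly the negation of the paper's ``Case 1'' (refuted by the same argument that eventually constant responses violate the requirement that Player II play both $0$ and $1$ infinitely often), and your binary tree of partial plays together with the injectivity argument via the branching move $m_{s_0}$ is the paper's perfect subtree construction. The explicit verification of $\sII \le \frakc$ is a harmless addition that the paper subsumes under the well-definedness remark after Lemma \ref{lem:spandspstar}.
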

	\begin{proof}
		Fix $\scrA \subseteq \Pow(\omega)$ such that Player II has a winning strategy for the splitting game with respect to $\scrA$. We shall show that $\scrA$ is of size $\frakc$.
		Consider the game tree $T \subseteq \omega^{<\omega}$ that the winning strategy determines.
		
		First, assume the following.
		\begin{itemize}
			\item (Case 1) There is an even number $k \in \omega$ and there is a  $\sigma \in \omega^k \cap T$ such that for every $m > \sigma(k-2)$ there is $i_m < 2$ such that for every $\tau \in T$ extending $\sigma$ and every $r \in [\abs{\sigma}, \abs{\tau})$, $\tau(r) = m$ implies $\tau(r+1) = i_m$.
		\end{itemize}
		
		Fix the witness $k, \sigma, \seq{i_m : m > \sigma(k-2)}$ for Case 1.
		Take an infinite set $A \subseteq [\sigma(k-2), \omega)$ and $i^* < 2$ such that $i_m = i^*$ for every $m \in A$.
		Enumerate $A$ in ascending order as $A = \{ a_i : i \in \omega \}$.

        Then considering the play of Player I that play $a_0, a_1, a_2, \dots$ in turn after $\sigma$, Player II that obeys the winning strategy plays $i^*$ eventually. So Player II loses, which is a contradiction.
		%
		
		So Case 1 is false.
		Thus we have
		\begin{itemize}
			\item (Case 2) For every even number $k \in \omega$ and every $\sigma \in \omega^k \cap T$, there is $m > \sigma(k-2)$ such that for every $i < 2$, there is $\tau \in T$ extending $\sigma$ and there is $r \in [\abs{\sigma}, \abs{\tau})$ such that $\tau(r) = m$ and  $\tau(r+1) = 1 - i$.
		\end{itemize}
		Then we can construct a perfect subtree of $T$ whose distinct paths yield distinct elements of $\scrA$.
		
        To do it, first we put $\sigma_\varnothing = \varnothing$.
		Suppose we have $\seq{\sigma_s : s \in 2^{\le l}}$.
		Then for each $s \in 2^l$, we can take $m_s > \sigma_s(\abs{\sigma_s} - 2)$ such that for every $i < 2$ we can take $\sigma_{s \append i}(\abs{\sigma_{s \append i}} - 2) = m_s$ and $\sigma_{s \append i}(\abs{\sigma_{s \append i}} - 1) = i$.
		
		Now for each $f \in 2^\omega$, we put $\sigma_f$ by $\sigma_f = \bigcup_{n \in \omega} \sigma_{f \upharpoonright n}$.
		
		For each $f \in 2^\omega$, we have $\sigma_f \in [T]$. So Player II wins at the play $\sigma_f$.
		So by the definition of the game, we can take $A_f \in \scrA$ such that
		\[
		\{ \sigma_f(r) : r \text{ even} \} \cap A_f = \{ \sigma_f(r) : r \text{ even and } \sigma_f(r+1) = 1 \}.
		\]
		
		We now claim that if $f$ and $g$ are distinct element of $2^\omega$, then we have $A_f \ne A_g$.
		Let $n := \min \{ n' : f(n') \ne g(n') \}$. Put $s = f \upharpoonright n = g \upharpoonright n$.
		We may assume that $f(n) = 0$ and $g(n) = 1$.
		We have the following equations:
		\begin{align*}
			&\sigma_{s \append 0}(\abs{\sigma_{s \append 0}}-2) = \sigma_{s \append 1}(\abs{\sigma_{s \append 1}}-2) = m_s, \\
			&\sigma_{s \append 0}(\abs{\sigma_{s \append 0}}-1) = 0, \text{ and } \sigma_{s \append 1}(\abs{\sigma_{s \append 1}}-1) = 1.
		\end{align*}
		So we have $m_s \not \in A_f$ and $m_s \in A_g$. Thus $A_f \ne A_g$.
		
		Therefore we have $\abs{\scrA} \ge \abs{\{A_f : f \in 2^\omega\}} = \frakc$.
	\end{proof}

    By the remark below Lemma \ref{lem:spandspstar}, 
    we obtain $\sI=\fraks_\sigma \le \sIstar \le \sIwstar$ and $\sII = \sIIstar = \sIIwstar=\frakc$.
    Let us further investigate the remaining $\sIstar$ and $\sIwstar$.
Firstly, let us introduce some notation:
\begin{defi}
	Let $x,y\in 2^\omega$.
	\begin{enumerate}
		
		\item $x\rsp y$ if $x(n)=y(n)=1$ and $x(m)=1$ and $y(m)=0$ for infinitely many $n,m<\omega$, i.e., $x^{-1}(\{1\})$ and $y^{-1}(\{1\})$ are infinite and  $x^{-1}(\{1\})$ is split by $y^{-1}(\{1\})$. $y\rp x$ if $\lnot(x\rsp y)$. Note that $y\rp x$ holds whenever $y\in\zero$.  
		
		\item Let $j\in2$ and $n<\omega$.
		$y\rp_{j,n} x$ if for all $m\geq n$, $x(m)=0$ or $y(m)=1-j$ holds. 
		Note $\rp=\bigcup_{j\in2}\bigcup_{n<\omega}\rp_{j,n}$.
		
		\item $y\tri x$ if $x\in 2^\omega\setminus\zero$ and $y\rp x$, i.e., I wins with the play $x$ against the play $y$ of II in the splitting* game. $y\trii x$ if either $x\in 2^\omega\setminus\zero$ and $y\rp x$ or $y\in\zero$, i.e., I wins with the play $x$ against the play $y$ of II in the splitting** game.
		
		\item $\str$ denotes the set of all I's strategies,
		namely, $\str\coloneq2^{(2^{<\omega})}$. For $\sigma\in\str$, $\sigma*y$ denotes the play of I according to the strategy $\sigma$ and the play $y$ of II, namely, $\sigma*y(n)\coloneq\sigma(y\on n)$ for $n<\omega$. $y\rp \sigma$ denotes $y\rp \sigma*y$
        and analogously for $\tri$ and $\trii$.
		\item Define relational systems $\rs$ and $\rss$ by $\rs\coloneq\langle 2^\omega,\str,\tri\rangle$ and $\rss\coloneq\langle 2^\omega,\str,\trii\rangle$. Thus, $\sIstar=\frakb(\rs)$ and $\sIwstar=\frakb(\rss)$ hold.

	\end{enumerate}
\end{defi}

\begin{lem}
	\label{lem_s_basics}
	\begin{enumerate}
		\item $y\trii x\Leftrightarrow y\tri x$ or $y\in\zero$. 
		\item If $y\tri x$ (or $y\trii x$), then $y\rp x$.
	\end{enumerate}
	
\end{lem}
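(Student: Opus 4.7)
My plan is simply to unpack definitions; both items are essentially tautologies once one recalls the definitions of $\tri$, $\trii$, and $\rp$ laid out just above.

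For part (1), I would start from the definition of $y\trii x$, which is stated as the disjunction: either $x\in 2^\omega\setminus\zero$ and $y\rp x$, or $y\in\zero$. The first disjunct is, by definition, exactly $y\tri x$, so after a single substitution I obtain the equivalence $y\trii x\Leftrightarrow y\tri x$ or $y\in\zero$. No combinatorial content beyond renaming is needed.

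For part (2), I would split on which relation is assumed. If $y\tri x$, the conclusion $y\rp x$ is built into the definition of $\tri$. If instead $y\trii x$, I apply part (1) to reduce to two subcases: either $y\tri x$, in which case the previous bullet applies, or $y\in\zero$, in which case I appeal to the parenthetical note in the definition of $\rp$ stating that $y\rp x$ holds whenever $y\in\zero$. For completeness I would include a one-line reminder of why that note is true: $y\rp x$ is by definition the negation of $x\rsp y$, and $x\rsp y$ requires $y^{-1}(\{1\})$ to be infinite, which fails as soon as $y\in\zero$.

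The main obstacle: there is none of substance — this lemma functions as a sanity check organizing the interplay between $\tri$, $\trii$, and $\rp$ before the more serious analysis of $\sIstar$ and $\sIwstar$ begins. The only point that deserves to be spelled out is the implication $y\in\zero\Rightarrow y\rp x$, handled by the single observation above.
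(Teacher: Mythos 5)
Your proof is correct, and it matches what the paper does: the lemma is stated without proof precisely because both items follow by unpacking the definitions of $\tri$, $\trii$, and $\rp$ exactly as you describe. The only non-vacuous step — that $y\in\zero$ implies $y\rp x$ because $x\rsp y$ requires $y^{-1}(\{1\})$ to be infinite — is the same observation the paper records as a note in the definition, and you justify it correctly.
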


When investigating $\sIstar$ and $\sIwstar$, it is sometimes useful to restrict strategies:   
\begin{defi}
	\begin{itemize}
		\item $\str_\infty\coloneq\{\sigma\in\str:\text{ for all }y\in\zero\cup\one, \sigma*y\in2^\omega\setminus\zero\}$.
		\item $\str_\infty^\one\coloneq\{\sigma\in\str:\text{ for all }y\in\one, \sigma*y\in2^\omega\setminus\zero\}$.
		\item $\rs_\infty\coloneq\langle 2^\omega,\str_\infty,\tri\rangle$.
		\item $\rss_\infty\coloneq\langle 2^\omega,\str_\infty^\one,\trii\rangle$.
	\end{itemize}

\end{defi}

Note that:
\begin{equation*}
    \text{for }y\in\zero\cup\one, ~y\tri\sigma \iff \sigma*y\in2^\omega\setminus\zero, \text{ and}
\end{equation*}

\begin{equation*}
    \text{for }y\in\one, ~y\trii\sigma\iff\sigma*y\in2^\omega\setminus\zero.
\end{equation*}

\begin{lem}
	$\frakb(\rs_\infty)=\frakb(\rs)=\sIstar$ and $\frakb(\rss_\infty)=\frakb(\rs)=\sIwstar$.
\end{lem}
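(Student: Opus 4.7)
The two inequalities $\frakb(\rs_\infty) \le \frakb(\rs)$ and $\frakb(\rss_\infty) \le \frakb(\rss)$ follow immediately from the inclusions $\str_\infty \subseteq \str$ and $\str_\infty^\one \subseteq \str$, so the real work is the reverse direction. The plan is to exploit the two equivalences displayed just before the lemma, which say that for the ``trivial'' plays $y \in \zero \cup \one$ (respectively $y \in \one$), the relation $y \tri \sigma$ (respectively $y \trii \sigma$) collapses to the purely syntactic condition $\sigma * y \in 2^\omega \setminus \zero$.

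For $\frakb(\rs) \le \frakb(\rs_\infty)$, I will pick an $A \subseteq 2^\omega$ with $|A| < \frakb(\rs)$ and set $A' \coloneq A \cup \zero \cup \one$. Because $\zero \cup \one$ is countable and $\frakb(\rs) = \sIstar \ge \sI = \fraks_\sigma \ge \aleph_1$, the enlarged family $A'$ still has cardinality strictly less than $\frakb(\rs)$, so some $\sigma \in \str$ bounds $A'$ in $\rs$. In particular $y \tri \sigma$ holds for every $y \in \zero \cup \one$, which by the first displayed equivalence amounts to $\sigma * y \not\in \zero$ for all such $y$; this is exactly the condition $\sigma \in \str_\infty$. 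The same $\sigma$ therefore witnesses that $A \subseteq A'$ is $\rs_\infty$-bounded.

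The argument for $\frakb(\rss) \le \frakb(\rss_\infty)$ is entirely parallel, with $A' \coloneq A \cup \one$ instead: adjoining $\zero$ is useless, because every $y \in \zero$ automatically satisfies $y \trii \sigma$ for any $\sigma$, whereas for $y \in \one$ the second displayed equivalence gives $y \trii \sigma \iff \sigma * y \not\in \zero$, forcing any $\rss$-bounding strategy for $A'$ to lie in $\str_\infty^\one$. I do not foresee any real obstacle here: the content of the lemma is already fully packaged into the two equivalences that precede its statement, and the only routine verification is that enlarging $A$ by a countable auxiliary set does not raise its cardinality up to $\frakb(\rs)$ or $\frakb(\rss)$, which is safeguarded by $\aleph_1 \le \fraks_\sigma \le \sIstar \le \sIwstar$.
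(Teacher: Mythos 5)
Your proof is correct and takes essentially the same route as the paper: enlarge the family by the countable set $\zero\cup\one$ (resp.\ $\one$), use $\aleph_1\le\fraks_\sigma\le\frakb(\rs)$ to keep the cardinality below the bounding number, and invoke the displayed equivalences to conclude that any bounding strategy for the enlarged family lies in $\str_\infty$ (resp.\ $\str_\infty^\one$). You have in fact spelled out the second half, which the paper dismisses with ``proved in the same way,'' and correctly read the middle term of the second equation as $\frakb(\rss)$ rather than the paper's typo $\frakb(\rs)$.
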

\begin{proof}
	$\frakb(\rs_\infty)\leq\frakb(\rs)$ is clear. To show $\frakb(\rs)\leq\frakb(\rs_\infty)$, let $F\subseteq2^\omega$ of size $<\frakb(\rs)$. $\frakb(\rs)=\sIstar\geq\sI=\fraks_\sigma$ is infinite, so $F^\prime\coloneq F\cup\zero\cup\one$ has size $<\frakb(\rs)$ and hence some $\sigma\in\str$ wins all $y\in F^\prime$. This $\sigma$ has to be in $\str_\infty$.
	The latter equation is proved in the same way.
\end{proof}

\begin{lem}
	\label{lem_sIwstarleqnonm}
	For any $\sigma\in\str_\infty^\one$, $\{y\in 2^\omega:y\trii\sigma\}\in\mathcal{M}$. In particular, $\sIwstar=\frakb(\rss_\infty)\leq\nonm$ holds.
\end{lem}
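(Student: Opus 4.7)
The plan is to fix $\sigma\in\str_\infty^\one$ and show directly that the set $W\coloneqq\{y\in 2^\omega:y\trii\sigma\}$ is meager; the inequality $\sIwstar\leq\nonm$ then follows at once, since any non-meager family $Y\subseteq 2^\omega$ of size $\nonm$ satisfies $Y\not\subseteq W$ for every $\sigma\in\str_\infty^\one$ and hence witnesses $\frakb(\rss_\infty)\leq\nonm$.

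Unpacking $\trii$ via Lemma~\ref{lem_s_basics}, $y\trii\sigma$ iff $y\in\zero$ or ($\sigma*y\in 2^\omega\setminus\zero$ and $y\rp\sigma*y$). The set $\zero$ is countable, hence meager, so using $\rp=\bigcup_{j\in 2,\,n<\omega}\rp_{j,n}$ it suffices to show that each $N_{j,n}\coloneqq\{y\in 2^\omega:y\rp_{j,n}\sigma*y\}$ is nowhere dense. Closedness is immediate: the defining clause ``for all $m\geq n$, $\sigma*y(m)=0$ or $y(m)=1-j$'' is a countable intersection of clopen conditions, because $\sigma*y(m)=\sigma(y\on m)$ depends only on $y\on m$.

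The main (and only nontrivial) step is nowhere-density, and this is where $\sigma\in\str_\infty^\one$ is used. Given any $s\in 2^{<\omega}$, put $y_0\coloneqq s\append\seq{1}^\omega\in\one$. By hypothesis $\sigma*y_0\in 2^\omega\setminus\zero$, so there exists $m\geq\max(n,|s|)$ with $\sigma*y_0(m)=1$. Define $s'\in 2^{m+1}$ by $s'\on m=y_0\on m$ and $s'(m)=j$. For every $y\in[s']$ we then have $\sigma*y(m)=\sigma(y\on m)=\sigma(y_0\on m)=\sigma*y_0(m)=1$ while $y(m)=j$, which violates the defining condition of $N_{j,n}$. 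Hence $[s']\cap N_{j,n}=\varnothing$, proving $N_{j,n}$ is nowhere dense.

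The only delicate point, rather than a real obstacle, is the observation that $\sigma*y(m)$ is determined by $y\on m$ and is therefore unaffected by the single-bit modification $s'(m)=j$; combined with $\sigma\in\str_\infty^\one$ (which supplies the required $m$ with $\sigma*y_0(m)=1$ regardless of the starting $s$), this yields a basic clopen neighborhood $[s']$ missing $N_{j,n}$.
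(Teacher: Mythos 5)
Your proof is correct and follows essentially the same route as the paper: both decompose $\rp$ as $\bigcup_{j,n}\rp_{j,n}$ and show each piece $\{y : y\rp_{j,n}\sigma*y\}$ is closed and nowhere dense, using $\sigma\in\str_\infty^\one$ to extend an arbitrary $s$ by $\seq{1}^\omega$ and find $m$ with $\sigma*y_0(m)=1$. The only cosmetic difference is that the paper absorbs $\zero$ via the containment $\{y:y\trii\sigma\}\subseteq\{y:y\rp\sigma*y\}$ rather than treating it as a separate countable (hence meager) set, and it leaves the nowhere-density verification implicit, which you correctly spell out.
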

\begin{proof}
For $\sigma\in\str_\infty^\one$, we have:
\begin{align*}
    \{y\in 2^\omega:y\trii\sigma\}&\subseteq\{y\in2^\omega:y\rp\sigma*y\} \\
    & =\bigcup_{j\in2}\bigcup_{n<\omega}\bigcap_{m\geq n}\left(\{y\in2^\omega:\sigma*y(m)=0\}\cup\{y\in2^\omega:y(m)=j\}\right)\in \mathcal{M},
\end{align*}
since $\sigma\in\str_\infty^\one$.
\end{proof}

    \begin{thm}
    \label{thm:sIstar_le_frakd}
		$\sIstar = \frakb(\rs_\infty) \le \frakd$ holds.
    \end{thm}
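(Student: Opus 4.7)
The plan is to use the preceding lemma to reduce to showing $\frakb(\rs_\infty) \le \frakd$, which I would do by exhibiting an $\rs_\infty$-unbounded family of size $\le \frakd$ constructed from a dominating family. First, I would fix a $\le^*$-dominating family $\mathcal{D} \subseteq \omega^\omega$ of strictly increasing functions with $\abs{\mathcal{D}} = \frakd$, and attach to each $g \in \mathcal{D}$ a play $y_g \in 2^\omega$ consisting of alternating blocks of $0$s and $1$s with boundaries at $g(0) < g(1) < \dots$: concretely, $y_g(k) := i \bmod 2$ whenever $k \in [g(i), g(i+1))$, and $y_g(k) := 0$ for $k < g(0)$.

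To exploit the assumption $\sigma \in \str_\infty$, I would encode its ``response time'' as follows. Since $s \append \seq{j}^\omega \in \zero \cup \one$ and $\sigma \in \str_\infty$ forces $\sigma * (s \append \seq{j}^\omega) \in 2^\omega \setminus \zero$, the quantity
\[
h_\sigma^j(s) := \min\{\, m \ge \abs{s} : \sigma(s \append \seq{j}^{m - \abs{s}}) = 1 \,\}
\]
is finite for every $s \in 2^{<\omega}$ and $j < 2$. Taking a finite maximum, $f_\sigma(n) := \max\{\, h_\sigma^j(s) : s \in 2^n,\ j < 2\,\}$ defines an element of $\omega^\omega$.

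The main step is: given $\sigma$, pick $g \in \mathcal{D}$ with $g \ge^* f_\sigma$, and reparametrize to $g^*(0) := 0$, $g^*(n+1) := g(g^*(n)) + 1$, so that $g^*(n+1) > f_\sigma(g^*(n))$ for all sufficiently large $n$. In every such late block $[g^*(n), g^*(n+1))$, the play $y_{g^*}$ is constantly $j := n \bmod 2$; setting $m := h_\sigma^j(y_{g^*} \on g^*(n))$ gives $m \le f_\sigma(g^*(n)) < g^*(n+1)$ together with $y_{g^*} \on m = (y_{g^*} \on g^*(n)) \append \seq{j}^{m - g^*(n)}$, whence $\sigma(y_{g^*} \on m) = 1$. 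Thus $\sigma * y_{g^*}$ has $1$s cofinally in both the $0$-blocks and the $1$-blocks of $y_{g^*}$, giving $\sigma * y_{g^*} \rsp y_{g^*}$; hence $y_{g^*}$ defeats $\sigma$, and $\{\, y_{g^*} : g \in \mathcal{D}\,\}$ is the desired $\rs_\infty$-unbounded family of size $\le \frakd$.

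The one delicate (but standard) technical point is that pointwise domination of $f_\sigma$ by some $g \in \mathcal{D}$ does not suffice, since the relevant prefixes of $y_g$ are of length $g(n)$ rather than $n$, so $f_\sigma$ must be dominated along the sparse subsequence $\langle g(n)\rangle_n$; this is exactly what the recursive reparametrization $g^*(n+1) = g(g^*(n)) + 1$ arranges, without inflating the family size beyond $\frakd$.
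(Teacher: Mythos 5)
Your proof is correct and is essentially the paper's argument: both build alternating-block plays from a dominating structure, encode the strategy's ``response delay'' on constant continuations (your $h^j_\sigma$ is the paper's $m^i_s$), and arrange the blocks to be long enough that $\sigma$ answers $1$ inside every late $0$-block and $1$-block, yielding $\sigma*y \rsp y$. The only difference is packaging: the paper works with a $\leq^*$-dominating family of interval partitions, while you recover the same effect from a dominating family in $\omega^\omega$ via the reparametrization $g^*(n+1)=g(g^*(n))+1$.
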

    \begin{proof}
            Let $\mathcal{I} \subseteq \mathsf{IP}$ be a dominating family of size $\frakd$ with respect to interval partitions. For $\bar{I} = \langle I_n : n < \omega \rangle \in \mathcal{I}$,  let $x_{\bar{I}}\in2^\omega$ be the characteristic function of $\bigcup_{n < \omega} I_{2n}$.
        Take a strategy $\sigma \in \str_\infty$ of Player I. We show that there is an interval partition $\bar{I}\in \mathcal{I}$ such that $\neg (x_{\bar{I}} \tri \sigma)$.

        Since $\sigma \in \str_\infty$, for all $s \in 2^{<\omega}$ and $i < 2$, we can find $m_s^i > 0$ such that $\sigma(s \append \seq{i}^{m_s^i}) = 1$.
        Fix such $m_s^i$'s and define a sequence $\langle j_k : k< \omega \rangle$ of natural numbers as follows:
        \begin{align*}
            j_0 &= 0, \\
            j_{2k + 1} &= j_{2k} + \max \{ m_s^1 : s\in 2^{j_{2k}} \} + 1  \text{ , and}\\
            j_{2k + 2} &= j_{2k + 1} + \max \{ m_s^0 : s\in 2^{j_{2k + 1}} \} + 1.
        \end{align*}
        Let $J_{k} = [ j_{2k}, j_{2k + 2} )$. Since $\mathcal{I}$ is a dominating family, there is $\bar{I} = \langle I_n : n < \omega \rangle \in \mathcal{I}$ such that $\bar{J} \leq^* \bar{I}$, that is, 
        \[ (\exists n_0 < \omega) (\forall n \ge n_0) (\exists k) (J_k \subseteq I_n).   \]

        Let $I_n = [i_n, i_{n + 1})$. Take an arbitrary $n \ge n_0$ and $k$ such that $J_k \subseteq I_n$. Note that $j_{2k}, j_{2k + 1} \in J_k$. First, we consider the case $n$ is even. Set $m = m_{x_{\bar{I}}\upharpoonright j_{2k}}^1 > 0$. By the construction of $j_{2k + 1}$, we have $i_n \le j_{2k} + m < j_{2k + 1}\le i_{n + 1}$. So it holds that $\sigma*x_{\bar{I}}(j_{2k} + m) = \sigma((x_{\bar{I}}\upharpoonright j_{2k}) \append \seq{1}^{m}) = 1$ and $x_{\bar{I}}(j_{2k} + m) = 1$. 

        In the case $n$ is odd, set $m = m_{x_{\bar{I}}\upharpoonright j_{2k + 1}}^0$. Then, by a similar argument, we have that $i_n \le j_{2k + 1} + m < i_{n + 1}$, $\sigma*x_{\bar{I}}(j_{2k + 1} + m) = 1$, and $x_{\bar{I}}(j_{2k + 1} + m) = 0$. Therefore, $\neg (x_{\bar{I}} \tri \sigma)$.
    \end{proof}

	\begin{thm}\label{thm:sIstarleqnonN}
		For any $\sigma\in\str$, $\{ y \in 2^\omega : y\tri \sigma \} \in \mathcal{N} $. In particular, $\sIstar \le \non(\nul)$ holds.
	\end{thm}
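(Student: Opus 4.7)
The plan is to equip $2^\omega$ with the standard fair-coin product measure $\mu$ and show directly that $\mu(\{y \in 2^\omega : y \tri \sigma\}) = 0$ for every $\sigma \in \str$. Once this is established, the ``in particular'' clause follows immediately: pick a non-null set $F \subseteq 2^\omega$ of size $\non(\nul)$; for every $\sigma$ the set $F$ cannot be contained in the null set $\{y : y \tri \sigma\}$, hence some $y \in F$ witnesses $\neg(y \tri \sigma)$, so $F$ is $\rs$-unbounded and $\sIstar = \frakb(\rs) \le \non(\nul)$.

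To prove the measure-zero claim, fix $\sigma \in \str$ and write $x = \sigma * y$. By Lemma~\ref{lem_s_basics} and the definition of $\tri$, the event $y \tri \sigma$ forces $x \notin \zero$ together with $y \rp x$, which precisely says that the subsequence of bits $(y(m))_{m \in x^{-1}(1)}$ is eventually constant. I will enumerate $x^{-1}(1)$ by stopping times $T_0(y) < T_1(y) < \cdots$, where $T_k(y)$ is the $k$-th $m < \omega$ with $\sigma(y \on m) = 1$ (and $T_k = \infty$ if there is no such $m$). Because $\sigma(y \on m)$ depends only on $y \on m$, the event $\{T_k = m\}$ is determined by $y \on m$, so on $\{T_k < \infty\}$ the bit $y(T_k)$ is independent of $y \on T_k$ and uniform on $\{0, 1\}$.

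An induction on $k$ then shows that on the event $E = \bigcap_k \{T_k < \infty\}$ (equivalently, $x \notin \zero$), the sequence $(y(T_k))_{k < \omega}$ is i.i.d.\ Bernoulli$(1/2)$ under $\mu$. Hence, $\mu$-almost surely, both $0$ and $1$ occur infinitely often in this sequence, so the event that it is eventually constant has $\mu$-measure zero. Since $\{y : y \tri \sigma\} \subseteq E \cap \{(y(T_k))_k \text{ is eventually constant}\}$, we conclude $\mu(\{y : y \tri \sigma\}) = 0$.

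The main technical point, and the only mildly delicate step, is the measurability bookkeeping that upgrades the pointwise observation ``$y(T_k)$ is a fresh fair coin flip'' to the statement that the whole sequence $(y(T_k))_k$ is i.i.d.\ on $E$. Equivalently, one can bypass probabilistic language entirely by writing
\[
\{y : y \rp \sigma * y\} = \bigcup_{j \in 2,\, n < \omega} A_{j,n}, \qquad A_{j,n} = \bigcap_{m \ge n}\bigl(\{y : \sigma * y(m) = 0\} \cup \{y : y(m) = 1-j\}\bigr),
\]
as in Lemma~\ref{lem_sIwstarleqnonm}, and verifying $\mu(E \cap A_{j,n}) = 0$ for each fixed $j, n$ by a direct cylinder computation: conditional on the values of $T_0, \dots, T_K$ with $T_K \ge n$, each subsequent freshly-revealed bit $y(T_{K+\ell})$ has probability $1/2$ of violating membership in $A_{j,n}$, and these trials are (conditionally) independent, so survival has probability zero.
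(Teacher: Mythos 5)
Your proof is correct, but it takes a genuinely different route from the paper's. The paper fixes a deterministic interval partition $\bar{I}$, shows by a counting induction (Lemma \ref{lem:halveiteration}) that the set of $y$ for which every interval $I_n$ with $n\in[a,b)$ contains a firing and all firings there receive the bit $\varepsilon$ has measure at most $2^{-(b-a)}$, and then must take a union over \emph{all} interval partitions (since the partition needed to capture a given $y$ depends on where $\sigma$ fires along $y$); because this union is uncountable, the paper invokes Goldstern's theorem on monotone $\boldsymbol{\Sigma}^1_1$-unions of null sets to conclude nullity. Your stopping-time formulation absorbs the $y$-dependence of the firing positions directly: since $\{T_k=m\}$ is determined by $y\on m$, the bit revealed at the $k$-th firing is a fresh fair coin, so for each fixed $j,n$ a finite cylinder computation gives $\mu(E\cap A_{j,n})\le 2^{-k}$ for every $k$, and $\{y: y\tri\sigma\}$ is covered by the \emph{countable} union $\bigcup_{j,n}(E\cap A_{j,n})$. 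This eliminates the need for Goldstern's theorem entirely and yields a shorter, self-contained argument; the underlying halving estimate is the same as in the paper's lemmas, just organized firing-by-firing rather than interval-by-interval. The only point to make fully precise in a written version is the conditional-independence bookkeeping you flag yourself (summing over the possible values of $T_K,\dots,T_{K+\ell}$ and using that each such event, together with the previously revealed bits, is determined by an initial segment of $y$), but the cylinder version you sketch at the end already does this correctly. Your derivation of the ``in particular'' clause from $\sIstar=\frakb(\rs)$ matches the intended one.
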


	To prove this theorem, we prepare some lemmas.
	
	\begin{lem}
		Let $I = [i, j)$ be an interval in $\omega$.
		Let $s \in \{0, 1\}^i$, $\sigma \colon \{0, 1\}^{<j} \to 2$, and $\varepsilon \in 2$.
		Set
		\begin{align*}
		B^I_{s,\varepsilon}(\sigma) = \{ x \in \{0, 1\}^j : &s \subseteq x, (\exists k \in I)(\sigma(x \upharpoonright k) = 1) \text{, and } \\
		& (\forall k \in I)(\sigma(x \upharpoonright k) = 1 \rightarrow x(k) = \varepsilon)\}.
		\end{align*}
		Then we have
		\[
		\frac{\abs{B^I_{s,\varepsilon}(\sigma)}}{2^{j-i}} \le \frac12.
		\]
	\end{lem}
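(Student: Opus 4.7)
The plan is to construct an injection
\[
\phi \colon B^I_{s,\varepsilon}(\sigma) \to \{x \in 2^j : s \subseteq x\} \setminus B^I_{s,\varepsilon}(\sigma),
\]
from which the bound follows immediately: the codomain has $2^{j-i}$ elements in total, so $B^I_{s,\varepsilon}(\sigma)$ and its image under $\phi$ are disjoint equinumerous subsets, giving $2\,|B^I_{s,\varepsilon}(\sigma)| \leq 2^{j-i}$. The map $\phi$ will be the involution that flips the bit at the first ``firing position'' of $\sigma$.

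More precisely, for each $x \in B^I_{s,\varepsilon}(\sigma)$, the existential clause of the definition guarantees that
\[
k_x := \min \{k \in I : \sigma(x \on k) = 1\}
\]
is well-defined, and the universal clause then forces $x(k_x) = \varepsilon$. I define $\phi(x)$ to be the element of $2^j$ agreeing with $x$ everywhere except at position $k_x$, where the bit is flipped to $1 - \varepsilon$. Note that $k_x \in I = [i,j)$ implies $k_x \geq i$, so $\phi(x)$ still extends $s$.

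The key observation is that $\phi(x) \on k_x = x \on k_x$, so the strategy $\sigma$ evaluates identically on every prefix of length $\leq k_x$. This yields two consequences. First, $k_{\phi(x)}$ is also well-defined and equals $k_x$; since $\phi(x)(k_x) = 1 - \varepsilon \neq \varepsilon$, the universal clause fails at $k = k_{\phi(x)}$, hence $\phi(x) \notin B^I_{s,\varepsilon}(\sigma)$. Second, given any $y$ in the image of $\phi$, the position $k^* := \min \{k \in I : \sigma(y \on k) = 1\}$ coincides with the original $k_x$, so the preimage $x$ is uniquely recovered by flipping $y$ at position $k^*$; thus $\phi$ is injective (in fact an involution onto its image). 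The only delicate point is verifying that $k_x$ is genuinely preserved under the flip, and this is immediate since the flip occurs at position $k_x$ itself and therefore leaves untouched every prefix $x \on k'$ for $k' \leq k_x$ that enters the minimum defining $k_x$.
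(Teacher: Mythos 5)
Your proof is correct. The map $\phi$ is well defined (the existential clause gives $k_x$, the universal clause gives $x(k_x)=\varepsilon$, and $k_x\geq i$ keeps $s\subseteq\phi(x)$), the prefix $\phi(x)\on k_x=x\on k_x$ indeed forces $k_{\phi(x)}=k_x$ and hence $\phi(x)\notin B^I_{s,\varepsilon}(\sigma)$, and the recovery of $k_x$ from $\phi(x)$ alone gives injectivity; the bound $2\abs{B^I_{s,\varepsilon}(\sigma)}\leq 2^{j-i}$ follows. This is a genuinely different route from the paper's: there the lemma is proved by induction on $\abs{I}$, peeling off the leftmost position --- if $\sigma(s)=1$ the bit at $i$ is pinned to $\varepsilon$ and one loses a factor of $2$ outright, and otherwise one splits over $x(i)\in\{0,1\}$ and applies the induction hypothesis to $[i+1,j)$. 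Both arguments ultimately exploit the first firing position of $\sigma$, but yours does so via an explicit injection (in fact a pairing of $B^I_{s,\varepsilon}(\sigma)$ with a disjoint set of equal size inside the cylinder above $s$), avoiding induction entirely, while the paper's recursive decomposition has the advantage of slotting directly into the proof of the subsequent Lemma \ref{lem:halveiteration}, where the same induction is iterated over consecutive intervals to get the factor $2^{-(b-a)}$; your injection would compose across intervals just as well, at the cost of a slightly more careful bookkeeping of which coordinates are flipped.
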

	\begin{proof}
		Induction on $\abs{I}$.
		If $\abs{I} = 1$ then $\abs{B^I_{s,\varepsilon}(\sigma)} = \abs{\{ s \append \seq{\varepsilon} \}} = 1$. So in this case, the lemma is proven.
		Suppose $\abs{I} \ge 2$.
		If $\sigma(s) = 1$, then $\abs{B^I_{s,\varepsilon}(\sigma)} \le \abs{\{ x : s \append \seq{\varepsilon} \subseteq x \}} = 2^{j-i-1}$.
		Otherwise, by the induction hypothesis, we have
		\[
		\abs{B^I_{s,\varepsilon}(\sigma)} = \abs{B^{[i+1, j)}_{s \append \seq{0},\varepsilon}(\sigma)} + \abs{B^{[i+1, j)}_{s \append \seq{1},\varepsilon}(\sigma)} \le 2^{j-(i+1)-1} + 2^{j-(i+1)-1} = 2^{j-i-1}.
		\]
	\end{proof}

	\begin{lem}\label{lem:halveiteration}
		Let $a < b < \omega$.
		Let $\bar{I} = \seq{I_n : a \le n < b}$ be a sequence of consecutive intervals in $\omega$ and put $m := \min I_a$ and $M := \max I_{b-1} + 1$.
		Let $\sigma \colon \{0, 1\}^{<M} \to 2$ and $\varepsilon \in 2$.
		Set
		\begin{align*}
			B^{\bar{I}}_{\varepsilon}(\sigma) = \{ x \in \{0, 1\}^M : &(\forall n \in [a, b))[(\exists k \in I_n)(\sigma(x \upharpoonright k) = 1) \text{, and } \\
			& \hspace{13ex}(\forall k \in I_n)(\sigma(x \upharpoonright k) = 1 \rightarrow x(k) = \varepsilon)]\}.
		\end{align*}
		Then we have
		\[
		\frac{\abs{B^{\bar{I}}_{\varepsilon}(\sigma)}}{2^M} \le \frac1{2^{b-a}}.
		\]
	\end{lem}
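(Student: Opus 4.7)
The plan is to prove Lemma~\ref{lem:halveiteration} by induction on $b-a$, using the previous single-interval lemma as the engine of the inductive step. The key observation is that $B^{\bar{I}}_\varepsilon(\sigma)$ factors cleanly along any peeled-off interval, so each additional interval contributes exactly the factor of $\tfrac{1}{2}$ already established.

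For the base case $b-a=1$, I would partition $B^{\bar{I}}_\varepsilon(\sigma) \subseteq \{0,1\}^M$ according to the prefix in $\{0,1\}^m$. Since the defining conditions only touch coordinates in $I_a=[m,M)$, this gives the disjoint union $B^{\bar{I}}_\varepsilon(\sigma) = \bigsqcup_{s \in \{0,1\}^m} B^{I_a}_{s,\varepsilon}(\sigma)$, and summing the single-interval bound $\tfrac{1}{2}\cdot 2^{M-m}$ over the $2^m$ prefixes yields $\abs{B^{\bar{I}}_\varepsilon(\sigma)} \le 2^{M-1}$, as required.

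For the inductive step I would peel off the last interval. Set $\bar{I}' = \seq{I_n : a \le n < b-1}$ and $M' = \max I_{b-2}+1$, which equals $\min I_{b-1}$ by consecutiveness of the intervals. Every $x \in B^{\bar{I}}_\varepsilon(\sigma)$ has restriction $y := x \upharpoonright M' \in B^{\bar{I}'}_\varepsilon(\sigma \upharpoonright \{0,1\}^{<M'})$, and conversely, for each such $y$, the extensions $x \in \{0,1\}^M$ of $y$ lying in $B^{\bar{I}}_\varepsilon(\sigma)$ are precisely the elements of $B^{I_{b-1}}_{y,\varepsilon}(\sigma)$. The inductive hypothesis bounds the number of admissible $y$'s by $2^{M'}/2^{b-a-1}$, while the previous lemma bounds each fiber by $2^{M-M'-1}$; the two factors multiply to the target $2^M/2^{b-a}$. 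No step here is genuinely difficult; the only care required is to verify that the consecutiveness hypothesis identifies $\min I_{b-1}$ with $M'$, so that the single-interval lemma applies cleanly with the prefix parameter taken to be $y \in \{0,1\}^{M'}$.
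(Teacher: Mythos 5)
Your proof is correct and follows exactly the route the paper intends: its entire proof of Lemma \ref{lem:halveiteration} is ``Use the previous lemma and induct on $b-a$,'' and your base case (partitioning over prefixes in $\{0,1\}^m$) and inductive step (peeling off $I_{b-1}$ and bounding each fiber by the single-interval lemma) are the natural fleshing-out of that one-line instruction. The multiplicative bookkeeping $2^{M'}\cdot 2^{-(b-a-1)}\cdot 2^{M-M'-1}=2^{M-(b-a)}$ checks out, as does the identification $M'=\min I_{b-1}$ from consecutiveness.
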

	\begin{proof}
		Use the previous lemma and induct on $b-a$.
	\end{proof}

	We use the following theorem due to Goldstern.

	\begin{fact}[{\cite{Goldstern1993}}]
		Let $A \subseteq \mathsf{IP} \times 2^\omega$ be a   $\boldsymbol{\Sigma}^1_1$ set.
		Suppose that the vertical section $A_{\bar{I}}$ is null for every $\bar{I} \in \mathsf{IP}$ and $A_{\bar{I}} \subseteq A_{\bar{J}}$ for every $\bar{I}, \bar{J} \in \mathsf{IP}$ with $\bar{I} \leq^* \bar{J}$.
		Then $\bigcup_{\bar{I} \in \mathsf{IP}} A_{\bar{I}}$ is null.
	\end{fact}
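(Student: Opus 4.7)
The plan is to prove Goldstern's Fact by contradiction. Suppose toward a contradiction that $N := \bigcup_{\bar{I} \in \mathsf{IP}} A_{\bar{I}}$ has positive outer Lebesgue measure. Since $N$ is the projection onto $2^\omega$ of the $\boldsymbol{\Sigma}^1_1$ set $A$, it is itself $\boldsymbol{\Sigma}^1_1$ and hence universally measurable, so it contains a compact $K \subseteq N$ with $\mu(K) > 0$. Apply the Jankov--von Neumann uniformization theorem to $A$ to produce a universally measurable function $F \colon N \to \mathsf{IP}$ with $y \in A_{F(y)}$ for every $y \in N$. By Luzin's theorem there is a compact $K' \subseteq K$ with $\mu(K') > 0$ on which $F$ is continuous, so $F[K']$ is a compact subset of $\mathsf{IP}$, where $\mathsf{IP}$ is viewed as a closed subspace of $\omega^\omega$ via the increasing sequence of block left-endpoints.

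The crucial combinatorial step is that every compact subset of $\mathsf{IP}$ is $\leq^*$-bounded. A compact subset is pointwise dominated by some strictly increasing $g \in \omega^\omega$, and one constructs $\bar{J} \in \mathsf{IP}$ with endpoints $j_m$ growing fast enough---for instance, $j_0 := 0$ and $j_{m+1} := g(j_m + 2) + 1$---so that each block $J_m = [j_m, j_{m+1})$ contains a full block of every $\bar{I} \in \mathsf{IP}$ whose endpoints are bounded by $g$. The key observation is that any strictly increasing sequence $(a_n)$ of naturals with $a_0 = 0$ satisfies $a_n \geq n$; combined with $a_n \leq g(n)$, this ensures that the least $n$ with $a_n \geq j_m$ satisfies $n \leq j_m$, and then $a_{n+1} \leq g(n+1) \leq g(j_m + 1) < j_{m+1}$, so the block $[a_n, a_{n+1})$ sits inside $J_m$. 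Hence $\bar{I} \leq^* \bar{J}$ for every $\bar{I} \in F[K']$.

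Invoking the monotonicity hypothesis, $K' \subseteq \bigcup_{y \in K'} A_{F(y)} \subseteq A_{\bar{J}}$, contradicting $\mu(A_{\bar{J}}) = 0$. The main obstacle will be the compactness-implies-$\leq^*$-boundedness step, which requires careful bookkeeping of the block-endpoint combinatorics; the remaining ingredients are standard descriptive set theory, since the selector provided by Jankov--von Neumann has exactly the regularity needed for Luzin's theorem against Lebesgue measure, and the $\boldsymbol{\Sigma}^1_1$-hypothesis on $A$ enters only through this uniformization and the universal measurability of $N$.
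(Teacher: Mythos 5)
Your argument is correct, and it is worth noting that the paper itself gives no proof of this Fact: it is quoted from Goldstern's 1993 paper, together with the remark that the $\mathsf{IP}$-version and the $\omega^\omega$-version are equivalent. Goldstern's original argument (as the title of the cited paper indicates) is an absoluteness proof: one puts a positive-measure Borel set inside the analytic, hence measurable, union, adds a random real $r$ to it, uses Shoenfield absoluteness to find in the extension some $\bar{I}$ with $r \in A_{\bar{I}}$, and then uses the $\omega^\omega$-bounding property of random forcing together with the monotonicity hypothesis to replace $\bar{I}$ by a ground-model bound $\bar{J}$, contradicting the fact that a random real avoids the ground-model null set $A_{\bar{J}}$. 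Your proof substitutes the classical descriptive-set-theoretic package for the forcing machinery: Jankov--von Neumann uniformization, Luzin's theorem, and the $\leq^*$-boundedness of compact subsets of $\mathsf{IP}$. The two arguments are morally dual (the bounding property of random forcing playing the role of Luzin-plus-compactness), and yours has the advantage of being self-contained and of working directly with $\mathsf{IP}$, so it also absorbs the paper's side remark about the equivalence of the two formulations. Your endpoint computation is sound: since the left endpoints $(a_n)$ of any $\bar{I}$ in the compact image satisfy $a_n \geq n$ and $a_n \leq g(n)$, the least $n$ with $a_n \geq j_m$ is at most $j_m$, whence $a_{n+1} \leq g(j_m+1) < g(j_m+2)+1 = j_{m+1}$; this in fact shows every block $J_m$ (not merely cofinitely many) absorbs a block of every $\bar{I} \in F[K']$, which is more than the definition of $\leq^*$ requires.
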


	Goldstern proved this theorem not with $\mathsf{IP}$, but with $\omega^\omega$. But these two versions can easily be shown to be equivalent.
	
	\begin{proof}[Proof of Theorem \ref{thm:sIstarleqnonN}]
		Fix a strategy $\sigma \colon 2^{<\omega} \to 2$ of Player I.
		
		For $\bar{I} \in \mathsf{IP}$ and $\varepsilon \in 2$, define
		\[
		C^{\bar{I}}_\varepsilon =  \bigcup_{a \in \omega} \bigcap_{b > a} B^{\bar{I} \upharpoonright [a, b)}_\varepsilon(\sigma).
		\]
		By Lemma \ref{lem:halveiteration}, this set $C^{\bar{I}}_\varepsilon$ is null.
		
		Moreover when $\bar{I} \leq^* \bar{J}$, we have $C^{\bar{I}}_\varepsilon \subseteq C^{\bar{J}}_\varepsilon$. Also, the set $\{ (\bar{I}, x) : x \in C^{\bar{I}}_\varepsilon \}$ is clearly a Borel set. Therefore, we can apply Goldstern's theorem to get that $\bigcup_{\bar{I} \in \mathsf{IP}} C^{\bar{I}}_\varepsilon$ is null.
		
		Moreover, we can easily observe that
		\[
		\{ y \in 2^\omega : y\tri \sigma \} \subseteq \bigcup_{\bar{I} \in \mathsf{IP}} C^{\bar{I}}_0 \cup \bigcup_{\bar{I} \in \mathsf{IP}} C^{\bar{I}}_1 \in \mathcal{N}. \qedhere
		\]
        \end{proof}

\begin{cor}
	\label{lem_s1game**_ineqs}
	For any $\sigma\in\str$, $\{ y \in 2^\omega : y\trii \sigma \} \in \mathcal{N} $. In particular, $\sIwstar \le \non(\nul)$ holds.
\end{cor}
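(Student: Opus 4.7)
The plan is to derive this as a direct consequence of Theorem \ref{thm:sIstarleqnonN} together with part (1) of Lemma \ref{lem_s_basics}. By that lemma, $y \trii \sigma * y$ holds if and only if either $y \tri \sigma * y$ or $y \in \zero$. Writing $\sigma*y$ in place of $x$, this gives the set identity
\[
\{\, y \in 2^\omega : y \trii \sigma \,\} \;=\; \{\, y \in 2^\omega : y \tri \sigma \,\} \,\cup\, \zero.
\]
The first set on the right is null by Theorem \ref{thm:sIstarleqnonN}, and $\zero$ is countable, hence null. The union of two null sets is null, which proves the first assertion.

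For the inequality $\sIwstar \le \non(\nul)$, the plan is the standard argument passing from a pointwise-nullness statement to a bound on $\frakb(\rss)$. Take a non-null set $\scrA \subseteq 2^\omega$ of size $\non(\nul)$. For any strategy $\sigma \in \str$, the set $\{ y : y \trii \sigma \}$ is null by the first part, so it cannot contain $\scrA$; thus some $y \in \scrA$ satisfies $\neg(y \trii \sigma)$, which means $y$ is a winning play for Player II against $\sigma$ in the splitting** game. Hence no $\sigma$ is a winning strategy for Player I against $\scrA$, giving $\sIwstar \le |\scrA| = \non(\nul)$.

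There is no real obstacle here: the work was done in proving Theorem \ref{thm:sIstarleqnonN}, and adding the countable set $\zero$ to the null set of plays where I defeats II in the splitting* game exactly accounts for the difference between the two winning conditions, as codified in Lemma \ref{lem_s_basics}(1).
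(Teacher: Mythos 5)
Your proof is correct and follows essentially the same route as the paper: both decompose $\{y : y \trii \sigma\}$ via Lemma \ref{lem_s_basics}(1) into the null set $\{y : y \tri \sigma\}$ from Theorem \ref{thm:sIstarleqnonN} together with the countable (hence null) set $\zero$. The only difference is that you spell out the standard passage from the pointwise nullness statement to the cardinal inequality, which the paper leaves implicit.
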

\begin{proof}
	By Lemma \ref{lem_s_basics} and Theorem \ref{thm:sIstarleqnonN}, $\{y\in 2^\omega:y\trii\sigma\}=\{y\in 2^\omega\setminus\zero:y\tri\sigma\}\cup\zero$ is null.
\end{proof}

\begin{lem}
\label{lem:sIstar_lowerbound_min}
	$\sIstar\geq\min\{\sIwstar,\frakb\}$.
\end{lem}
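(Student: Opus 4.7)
The plan is to show that if $F \subseteq 2^\omega$ has $\abs{F} < \min\{\sIwstar, \frakb\}$, then some $\sigma \in \str$ satisfies $y \tri \sigma$ for all $y \in F$. Using $\abs{F} < \sIwstar$, first pick $\tau \in \str$ winning the splitting** game against $F$, i.e., $y \trii \tau$ for all $y \in F$. By Lemma \ref{lem_s_basics}, every $y \in F \setminus \zero$ satisfies $y \tri \tau$ already, so $\tau$ handles $F \setminus \zero$ in the splitting* sense. The only problematic plays are those in $F \cap \zero$: for such $y$, $\tau * y$ might itself lie in $\zero$, in which case $\tau$ loses in the splitting* game. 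The new strategy $\sigma$ will be obtained from $\tau$ by occasionally forcing a $1$ whenever II's visible history shows many trailing zeros; the role of $\frakb$ is to ensure that this modification does not disrupt the behavior of $\tau$ on the genuinely infinite plays in $F \setminus \zero$.

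Concretely, for $t \in 2^{<\omega}$ let $n_t$ be the number of $1$'s in $t$ and $m_t$ the length of its trailing $0$-run. For $y \in F \setminus (\zero \cup \one)$, let $f_y(n)$ denote the position of the $(n{+}1)$-th $1$ in $y$ and define the gap function $H_y(n) \coloneq f_y(n+1) - f_y(n)$. Using $\abs{F} < \frakb$, fix a non-decreasing $g \in \omega^\omega$ with $H_y \leq^* g$ for every such $y$, and set
\[
\sigma(t) = \begin{cases} 1 & \text{if } m_t \geq g(n_t), \\ \tau(t) & \text{otherwise.} \end{cases}
\]

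To verify $y \tri \sigma$ for each $y \in F$, split into three cases. If $y = s \append \seq{0}^\omega \in F \cap \zero$, then $m_{y \on k} \geq g(n_s) = g(n_{y \on k})$ for all large $k$, so $\sigma * y$ is eventually $1$, hence in $2^\omega \setminus \zero$; combined with the automatic $y \rp \sigma * y$ (since $y \in \zero$), this gives $y \tri \sigma$. If $y \in F \cap \one$, then $m_{y \on k} = 0$ eventually, so $\sigma$ agrees with $\tau$ on a tail of $y$ and the required property transfers from $y \tri \tau$. If $y \in F \setminus (\zero \cup \one)$, a short calculation shows that $\sigma(y \on k) = 1 \neq \tau(y \on k)$ can occur inside the $n$-th gap between consecutive $1$'s of $y$ only when $H_y(n) \geq g(n+1) + 1$, which by the choice of $g$ holds for only finitely many $n$; hence $\sigma * y$ and $\tau * y$ agree on a tail, and $y \tri \tau$ transfers to $y \tri \sigma$. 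The main obstacle is precisely this last case: one must ensure that the boosting mechanism designed to rescue $\zero$-plays does not perturb the $\rp$-relation on truly infinite plays, and $\frakb$-domination of the gap functions $H_y$ is exactly what makes this possible.
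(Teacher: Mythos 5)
Your proof is correct and follows essentially the same route as the paper: obtain a strategy $\tau$ with $y\trii\tau$ for all $y\in F$ from $\abs{F}<\sIwstar$, then use $\abs{F}<\frakb$ to modify $\tau$ so that it eventually plays $1$ against every $y\in\zero$ while changing only finitely many moves against each $y\in F\setminus\zero$, so that $\tri$ transfers. The paper calibrates the modification with a $\leq^*$-dominating interval partition rather than your dominating bound $g$ on the gap functions $H_y$, but these are equivalent devices and the argument is otherwise the same.
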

\begin{proof}
	Let $F\subseteq2^\omega$ of size $<\min\{\sIwstar,\frakb\}$ and $F^-\coloneq F\setminus\zero$.
	For $x\in F^-$ and $k<\omega$, let $i_k$ denote the $k$-th element of $x^{-1}(\{1\})\in\ooo$, $I^x_k\coloneq\left[i_{k-1},i_k\right)$ ($i_{-1}\coloneq0)$.
	Define an interval partition $I^x$ by $I^x\coloneq\langle I^x_k:k<\omega\rangle$.
	Since $|F^-|<\frakb$, there is an interval partition $J=\langle J_m:m<\omega\rangle$ dominating all $\{I^x:x\in F^-\}$,
	namely, for any $x\in F^- \text{ and for all but finitely many }m<\omega,
	\text{ there is }k<\omega\text{ such that }I^x_k\subseteq J_m$.
	Particularly we have:
	\begin{equation}
		\label{eq_F_minus}
		(\forall x\in F^-)(\forall^\infty m<\omega)
		(\exists i\in J_m)(x(i)=1).
	\end{equation}
	Since $|F^-|<\sIwstar$, there is a strategy $\sigma\in\str$ such that $x\trii\sigma$ for all $x\in F^-$.
	Define the strategy $\sigma^\prime$ by, for $n<\omega$, $s\in2^n$ and $m<\omega$ with $n\in J_m$:
	\begin{equation}
		\label{eq_sigmaPrime}
		\sigma^\prime(s)\coloneq
		\begin{cases}
			\sigma(s) & \text{if } s(i)=1\text{ for some }i\in J_{m-1}~(\text{ put }J_{-1}\coloneq\emptyset),\\
			1 & \text{otherwise. }
		\end{cases}
	\end{equation}
	By \eqref{eq_F_minus}, every $x\in F^-$ satisfies $\sigma^\prime(x\on n)=\sigma(x\on n)$ for all but finitely many $n$, and every $x\in\zero$ satisfies $\sigma^\prime(x\on n)=1$ for all but finitely many $n$ by definition.
	Thus, $x\tri \sigma^\prime$ holds for any $x\in F\subseteq F^-\cup\zero$.	
\end{proof}


\section{Consistency results}\label{sec:consistency}


When showing the consistency of $\mathfrak{s}=\mathfrak{s}_\sigma<\sIstar<\sIwstar$, we will use the \textit{Fr-limit} method to keep $\sIstar$ small in the forcing iteration, 
formalized by Mej{\'\i}a in \cite{mejia2019matrix}. However, to prove Main Lemma \ref{mainlem} we need a more detailed formulation, so let us introduce the general theory of Fr-limits in detail in this section.
 
 \begin{defi}
	Let $\P$ be a poset.
	\begin{enumerate}
		\item For a countable sequence $\bar{p}=\langle p_m:m<\omega\rangle\in\P^\omega$, define $\Vdash_\P\dot{W}(\bar{p})\coloneq\{m<\omega:p_m\in\dot{G}\}$ where $\dot{G}$ denotes the canonical $\P$-name of a generic filter.
		\item $Q\subseteq \P$ is Fr-linked if there exists a function $\lim\colon Q^\omega\to\P$ such that for any countable sequence 
		$\bar{q}\in Q^\omega$, 
		\begin{equation}
			\textstyle{\lim\bar{q}} \Vdash |\dot{W}(\bar{q})|=\omega.
		\end{equation}
		
		\item For an infinite cardinal $\mu$, $\P$ is $\mu$-Fr-linked if it is a union of $\mu$-many Fr-linked components. When $\mu=\aleph_0$, we use $\sigma$ instead as usual. Define $<\mu$-Fr-linkedness in the same way (for uncountable $\mu$).

	\end{enumerate}

	We often say ``$\P$ has Fr-limits'' instead of ``$\P$ is $\sigma$-Fr-linked''.

\end{defi}
Since any singleton $\{q\}\subseteq\P$ is Fr-linked, we have:
\begin{lem}
\label{lem:poset_is_its_size_Frlinked}
    Every poset $\P$ is $|\P|$-Fr-linked. In particular, Cohen forcing $\mathbb{C}$ has Fr-limits.
\end{lem}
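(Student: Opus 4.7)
The plan is to observe directly from the definition that a singleton is Fr-linked, then decompose $\P$ into singletons and apply the countability of Cohen forcing.

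First, I would verify the claim that any singleton $\{q\}\subseteq\P$ is Fr-linked, which is asserted immediately before the lemma but not argued. The set of countable sequences from $\{q\}$ consists of the single constant sequence $\bar{q}=\langle q,q,q,\ldots\rangle$. Define $\lim\bar{q}\coloneq q$. Since $q$ forces that $q\in\dot{G}$, it forces that every index $m<\omega$ belongs to $\dot{W}(\bar{q})=\{m<\omega:q\in\dot{G}\}$, so $\lim\bar{q}\Vdash_\P \dot{W}(\bar{q})=\omega$, which is in particular infinite. Hence $\{q\}$ is Fr-linked via this trivial limit map.

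Next, writing $\P=\bigcup_{q\in\P}\{q\}$ exhibits $\P$ as a union of $|\P|$-many Fr-linked components, establishing that $\P$ is $|\P|$-Fr-linked. Finally, for Cohen forcing $\mathbb{C}$, one uses that $\mathbb{C}$ is countable (as a poset of finite conditions), so $|\mathbb{C}|=\aleph_0$ and the first part yields that $\mathbb{C}$ is $\aleph_0$-Fr-linked, which by the stated convention means $\sigma$-Fr-linked, i.e., $\mathbb{C}$ has Fr-limits.

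There is no serious obstacle here: the whole lemma is essentially a bookkeeping consequence of the definition once one notices that constant sequences trivially satisfy the Fr-limit condition. The only thing worth spelling out carefully is the verification for singletons, which I would write out explicitly since the paper asserts it without proof just above the lemma.
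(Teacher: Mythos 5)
Your proposal is correct and matches the paper's approach exactly: the paper justifies the lemma with the single observation that any singleton $\{q\}\subseteq\P$ is Fr-linked (via the constant limit map $\lim\bar{q}=q$, since $q\Vdash q\in\dot{G}$), and then decomposes $\P$ into singletons. You have simply written out the verification the paper leaves implicit.
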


We formulate a finite support iteration (fsi) of $<\kappa$-Fr-linked forcings:
\begin{defi}
	\label{dfn_Gamma_iteration}
		\begin{itemize}
		\item 
A fsi $\P_\gamma=\langle(\P_\xi,\qd_\xi):\xi<\gamma\rangle $ of ccc forcings is a $<\kappa$-Fr-iteration with witnesses $\langle\theta_\xi:\xi<\gamma\rangle$ and $\langle\dot{Q}_{\xi,\zeta}:\zeta<\theta_\xi,\xi<\gamma\rangle$ if for any $\xi<\gamma$, $\theta_\xi$ is a cardinal $<\kappa$ and
			$\langle\dot{Q}_{\xi,\zeta}:\zeta<\theta_\xi\rangle$ are $\P_\xi$-names satisfying: 
			\[
			\Vdash_{\P_\xi}\dot{Q}_{\xi,\zeta}\subseteq\qd_\xi\text{ is Fr-linked for }\zeta<\theta_\xi\text{ and }\bigcup_{\zeta<\theta_\xi}\dot{Q}_{\xi,\zeta}=\qd_\xi.
			\]
		\item $p\in\P_\gamma$ is determined if for each $\xi\in\dom(p)$, there is $\zeta_\xi<\theta_\xi$ such that  $\Vdash_\xi p(\xi)\in \dot{Q}_{\xi,\zeta_\xi}$. Note that there are densely many determined conditions (proved by induction on $\gamma$).

	\end{itemize}
\end{defi}

\begin{defi}
	\label{dfn_uniform_delta_system}
	Let $\delta$ be an ordinal and 
	$\bar{p}=\langle p_m: m<\delta\rangle\in(\P_\gamma)^\delta$.
	$\bar{p}$ is a uniform $\Delta$-system if:
	\begin{enumerate}
		\item Each $p_m$ is determined as witnessed by $\langle\zeta^m_{\xi} :\xi\in\dom(p_m)\rangle$.
		\item $\{\dom(p_m): m<\delta\}$ forms a $\Delta$-system with some root $\nabla$.
		\item For $\xi\in\nabla$, all $\zeta^m_\xi$ are the same, i.e., all $p_m(\xi)$ are forced to be in a common Fr-linked component.
	
		\item All $|\dom(p_ m)|$ are the same $n^\prime$ and $\dom(p_ m)=\{\a_{n, m}:n<n^\prime\}$ is the increasing enumeration.
		\item There is $r^\prime\subseteq n^\prime$ such that $n\in r^\prime\Leftrightarrow\a_{n, m}\in\nabla$ for $n<n^\prime$.
		\item For $n\in n^\prime\setminus r^\prime$, $\langle\a_{n, m}: m<\delta \rangle$ is (strictly) increasing.

	\end{enumerate}
\end{defi}

\begin{defi}
	\label{dfn_of_lim_ite}
	Let $\bar{p}=\langle p_m: m<\omega\rangle\in(\P_\gamma)^\omega$ be a uniform $\Delta$-system with root $\nabla$.
	We (inductively) define $p^\infty=\lim\bar{p}$ as follows:
	\begin{enumerate}
		\item $\dom(p^\infty)\coloneq\nabla$.
		\item $p^\infty\on\xi\Vdash_\xi p^\infty(\xi)\coloneq\lim\langle p_m(\xi):m\in\dot{W}(\bar{p}\on\xi)\rangle$ for $\xi\in\nabla$, where $\bar{p}\on\xi\coloneq\langle p_m\on\xi:m<\omega\rangle\in(\P_\xi)^\omega$. \label{eq_lim_second}
	\end{enumerate}
\end{defi}
To see that the second item is a valid definition, $\dot{W}(\bar{p}\on\xi)$ has to be infinite and this is inductively shown as follows:


\begin{lem}[\cite{mejia2019matrix}]
	Let $\bar{p}=\langle p_m: m<\omega\rangle\in(\P_\gamma)^\omega$ be a uniform $\Delta$-system and $p^\infty\coloneq\lim\bar{p}$.
	Then,
	$p^\infty\Vdash_\gamma|\dot{W}(\bar{p})|=\omega$.
\end{lem}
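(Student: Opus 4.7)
The plan is to prove the lemma by induction on $\gamma$, mirroring the structure of the finite support iteration. The base case $\gamma = 0$ is trivial, since every $p_m$ is empty and $\dot W(\bar p) = \omega$. Fix a uniform $\Delta$-system $\bar p$ with root $\nabla$ and set $p^\infty = \lim \bar p$.

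For the successor step $\gamma = \xi + 1$, there are two cases. If $\xi \notin \nabla$, then $\nabla \subseteq \xi$ and $p^\infty = p^\infty \on \xi \in \P_\xi$. By condition~(6) of Definition~\ref{dfn_uniform_delta_system}, the non-root ordinals $\alpha_{n,m}$ are strictly increasing in $m$, so $\xi \in \dom(p_m)$ for only finitely many $m$; after dropping those terms, $\bar p \on \xi$ is a uniform $\Delta$-system in $\P_\xi$ and $p_m \in \dot G_\gamma$ is equivalent to $p_m \on \xi \in \dot G_\xi$. Applying the induction hypothesis to $\bar p \on \xi$ then gives the conclusion. If $\xi \in \nabla$, then by condition~(3) all $p_m(\xi)$ lie in a common Fr-linked component $\dot Q_{\xi,\zeta}$. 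The induction hypothesis gives $p^\infty \on \xi \Vdash_\xi |\dot W(\bar p \on \xi)| = \omega$, and in $V[\dot G_\xi]$, the defining property of the Fr-limit applied to the (infinite) generic subsequence $\langle p_m(\xi) : m \in \dot W(\bar p \on \xi)\rangle$ shows that $p^\infty(\xi)$ forces infinitely many of these $m$ to additionally satisfy $p_m(\xi) \in \dot G_{\xi+1}/\dot G_\xi$. Combining, $p^\infty \Vdash_\gamma |\dot W(\bar p)|=\omega$.

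For the limit step, use that $\nabla$ is finite (being the intersection of finite supports), so one may pick $\xi^* < \gamma$ with $\nabla \subseteq \xi^*$. By condition~(6) again, for sufficiently large $m$ the non-root part $\dom(p_m) \setminus \nabla$ lies entirely in $[\xi^*, \gamma)$; after dropping the initial finitely many terms, $\bar p \on \xi^*$ becomes a uniform $\Delta$-system in $\P_{\xi^*}$ with root $\nabla$ and the same limit $p^\infty$. The induction hypothesis then gives $p^\infty \Vdash_{\xi^*} |\dot W(\bar p \on \xi^*)| = \omega$. To transfer to $\P_\gamma$, note that for distinct large $m$ the tails $p_m \on [\xi^*, \gamma)$ have pairwise disjoint supports, so a standard density argument suffices: given $q \leq p^\infty$ in $\P_\gamma$ and $n < \omega$, an extension of $q$ forces some $m \geq n$ into $\dot W(\bar p \on \xi^*)$ with $\dom(p_m) \cap \dom(q) \subseteq \xi^*$, and this extension is compatible with the tail $p_m \on [\xi^*, \gamma)$, yielding a common extension witnessing $m \in \dot W(\bar p)$.

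The main obstacle is less a deep idea than careful bookkeeping: one must verify at each inductive step that $\bar p \on \xi$ or $\bar p \on \xi^*$ remains a uniform $\Delta$-system (after possibly trimming finitely many terms) and that its Fr-limit in the sub-iteration coincides with the restriction of $p^\infty$. A secondary subtlety is that the Fr-limit function is, a priori, defined on ground-model $\omega$-sequences, whereas the successor case with $\xi \in \nabla$ requires applying it to a generic subsequence; this is resolved by requiring the Fr-linked definition to include a uniform rule covering all cofinite subsequences, which is implicitly built into Mejía's formulation.
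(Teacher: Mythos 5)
Your overall strategy --- induction on $\gamma$, with the successor step at a coordinate $\xi\in\nabla$ handled by applying the Fr-limit property to the generic subsequence $\langle p_m(\xi):m\in\dot{W}(\bar{p}\on\xi)\rangle$, and the limit step handled by reducing to a bounded sub-iteration and pasting pairwise disjoint tails --- is the same as the paper's, and your successor step is essentially identical to the published argument (the paper simply says ``we may assume $\xi\in\nabla$'' and then does exactly what you do in that case).

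The limit step, however, contains a false claim. You assert that once $\xi^*\supseteq\nabla$ is fixed, ``for sufficiently large $m$ the non-root part $\dom(p_m)\setminus\nabla$ lies entirely in $[\xi^*,\gamma)$'', citing condition (6). That condition only says that each sequence $\langle\a_{n,m}:m<\omega\rangle$ (for $n\notin r'$) is strictly increasing; it says nothing about its supremum. For instance, with $\nabla=\{\omega_1\}$ and $\a_{0,m}=m$, every admissible $\xi^*$ satisfies $\a_{0,m}<\omega\le\omega_1<\xi^*$ for all $m$, so that non-root coordinate never clears $\xi^*$. This is exactly why the paper introduces $I=\{n<n':\sup_m\a_{n,m}=\gamma\}$ and chooses the cut point $\xi_q$ above both $\max\dom(q)$ and $\sup_m\a_{n,m}$ for every $n\notin I$: only the $I$-coordinates supply the pairwise disjoint tails, while the bounded non-root coordinates are pushed below the cut, where they become part of the restricted uniform $\Delta$-system and are absorbed by the induction hypothesis. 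Your argument can be repaired along these lines (the restricted sequence $\bar{p}\on\xi^*$ is still a uniform $\Delta$-system with root $\nabla$ even when it retains non-trivial non-root coordinates below $\xi^*$), but as written the claim is wrong and the reader cannot tell whether you rely on it. Two smaller points: (i) in your density argument, the extension of $q$ deciding some $m\in\dot{W}(\bar{p}\on\xi^*)$ must be taken with support contained in $\xi^*$ (possible, since this is a $\P_{\xi^*}$-statement); otherwise it is the domain of that \emph{extension}, not $\dom(q)$, that must avoid the tail of $p_m$. (ii) ``Dropping finitely many terms'' changes the sequence fed to the Fr-limit function, and the definition gives no guarantee that the limit of a cofinite subsequence coincides with $p^\infty$; the paper sidesteps this by never trimming, only restricting conditions to initial segments of the iteration, which leaves $\lim\bar{p}$ literally unchanged because it depends only on the root coordinates.
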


\begin{proof}
	Induction on $\xi$.
	
	\textit{Successor Step}. Let $\bar{r}=\langle r_m=(p_m,\dot{q}_m): m<\omega\rangle\in(\P_{\xi+1})^\omega$ be a uniform $\Delta$-system with root $\nabla$ and we may assume that $\xi\in\nabla$. Put $\bar{p}\coloneq\langle p_m:m<\omega\rangle$. 
	By Induction Hypothesis $p^\infty\coloneq\lim\bar{p}\Vdash_{\P_\xi}\dot{A}\coloneq\dot{W}(\bar{p})\in\ooo$. 
	Since all $\dot{q}_m$ are forced to be in a common Fr-linked component,
	$p^\infty\Vdash_{\P_\xi} \dot{q}^\infty\coloneq\lim\langle \dot{q}_m:m\in\dot{A}\rangle$ is a valid definition and $p^\infty\Vdash_{\P_\xi} \dot{q}^\infty\Vdash_{\qd_\xi} \dot{B}\coloneq\{m\in\dot{A}:\dot{q}_m\in\dot{H}_\xi\}\in[\dot{A}]^{\omega}$ where $\dot{H}_\xi$ denotes the canonical name of the $\qd_\xi$-generic filter.
	Thus, $\lim\bar{r}=(p^\infty,\dot{q}^\infty)$ forces 
	$\dot{W}(\bar{r})\supseteq\{m<\omega:(p_m,\dot{q}_m)\in \dot{G}_\xi*\dot{H}_\xi\}\supseteq\dot{B}\in\ooo$.
	
	\textit{Limit Step}. 
	Let $\gamma$ be a limit, $\bar{p}=\langle p_m: m<\omega\rangle\in(\P_\gamma)^\omega$ a uniform $\Delta$-system with root $\nabla$. Assume $m_0<\omega$ and $q\leq p^\infty\coloneq\lim\bar{p}$ and we will find $m>m_0$ and $r\leq q$ forcing $p_m\in\dot{G}_\gamma$.
	To avoid triviality, we may assume that $\cfi(\gamma)=\omega$ and $I\coloneq\{n<n^\prime:\sup_{m<\omega}\a_{n,m}=\gamma\}\neq\emptyset$.
	Let $\xi_p\coloneq\max\nabla+1<\gamma$ and $\xi_q\coloneq\max\dom(q)+1\geq\xi_p$. By extending $q$ and increasing $m_0$, we may assume that:
	\begin{equation}
		\label{eq_I}
		\text{for }n\in n^\prime\setminus I, \sup_{m<\omega}\a_{n,m}<\xi_q \text{ and for }n\in I, \a_{n, m}>\xi_q \text{ for }m>m_0.
	\end{equation} 
	Since $\bar{p}_{\xi_p}\coloneq\langle p_m\on \xi_p:m<\omega\rangle$ also forms a uniform $\Delta$-system (in $\P_{\xi_p}$ and hence in $\P_{\xi_q}$) and $\lim\bar{p}_{\xi_p}=\lim\bar{p}=p^\infty$ by definition,
	there are $q^\prime\leq q$ in $\P_{\xi_q}$ and $m>m_0$ such that $q^\prime\Vdash_{\xi_q} p_m\on\xi_p\in\dot{G}_{\xi_q}$ by Induction Hypothesis. We may assume $q^\prime\leq p_m\on\xi_p$ and by \eqref{eq_I}, $r\coloneq q^\prime\cup\{\langle\a_{n,m},p_m(\a_{n, m})\rangle:n\in I\}$ is a valid condition in $\P_\gamma$ extending $q^\prime$.
	Again by \eqref{eq_I}, we have $\dom(p_m)=\dom(p_m\on\xi_q)\cup\{\a_{n,m}:n\in I\}$, so $r$ also extends $p_m$.
\end{proof}

Let us introduce the following forcing notions $\pst$ and $\pstt$, which generically add winning strategies and therefore increase $\sIstar$ and $\sIwstar$, respectively:
\begin{defi}
	\begin{itemize}
		\item $\fstr\coloneq \bigcup_{n<\omega}2^{(2^{<n})}$, the set of all finite partial strategies.
		\item $\pstt\coloneq\{(\sigma,F):\sigma\in\fstr, F\in[2^\omega]^{<\omega}\}$. $(\sigma^\prime, F^\prime)\leq(\sigma,F):\Leftrightarrow \sigma^\prime\supseteq\sigma, F^\prime\supseteq F$ and for all $n\in\left[|\sigma|,|\sigma^\prime|\right)$ and $y\in F$, $\sigma^\prime(y\on n)\leq y(n)$.
		\item $\pst\coloneq\{(\sigma,F)\in\pstt: F\subseteq 2^\omega\setminus\zero \}$ and the order is defined by restriction.
		\item For both $\pst$ and $\pstt$, $\sigma_G$ denotes the generic strategy $\sigma_G\coloneq\bigcup_{(\sigma,F)\in G}\sigma$ for a generic filter $G$. 
		
	\end{itemize}
	
\end{defi}

\begin{lem}
			\begin{enumerate}
				\item $\pst$ and $\pstt$ are $\sigma$-centered.
				\item For $y\in 2^\omega$, $\Vdash_{\pstt} y\rp_0\sigma_G$, where $\rp_0\coloneq\bigcup_{n<\omega}\rp_{0,n}$.
				\item For $y\in 2^\omega\setminus\zero$, $\Vdash_{\pst} y\rp_0\sigma_G$.

				\item For $y\in 2^\omega\setminus\zero$, $\Vdash_{\pstt} \sigma_G*y\in2^\omega\setminus\zero$.
				\item For $y\in 2^\omega$, $\Vdash_{\pst} \sigma_G*y\in2^\omega\setminus\zero$.
			\end{enumerate}
		\end{lem}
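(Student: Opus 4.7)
Item (1) is immediate. Partition both posets according to the first coordinate $\sigma \in \fstr$, which is countable. Fix $\sigma$: given finitely many conditions $(\sigma,F_1),\dots,(\sigma,F_k)$ sharing it, the union $(\sigma, F_1\cup\dots\cup F_k)$ is a common lower bound, since the ordering constraint ``$\sigma'(y\on n)\le y(n)$ for $n\in[|\sigma|,|\sigma|)$'' is vacuous. Note $F_1\cup\dots\cup F_k\subseteq 2^\omega\setminus\zero$ whenever each $F_i$ is, so the argument applies to $\pst$ as well.

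For items (2) and (3), the key observation is that the conditions $(\sigma, F)$ with $y\in F$ are dense (in $\pstt$ for any $y\in 2^\omega$, in $\pst$ whenever $y\in 2^\omega\setminus\zero$): given $(\sigma,F)$, simply replace $F$ by $F\cup\{y\}$. By the definition of the order (and transitivity), every extension $(\sigma',F')\le (\sigma, F\cup\{y\})$ then satisfies $\sigma'(y\on n)\le y(n)$ for $n\in[|\sigma|,|\sigma'|)$. So in the extension, $\sigma_G*y(m)\le y(m)$ for all $m\ge |\sigma|$, i.e.\ $\sigma_G*y(m)=0$ or $y(m)=1$, which is exactly $y\rp_{0,|\sigma|}\sigma_G$.

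For items (4) and (5), I would argue that for each $N$, the set of conditions forcing $\sigma_G(y\on n)=1$ for some $n\ge N$ is dense. Given $(\sigma,F)$, I must enlarge $\sigma$ to some $\sigma'$ of length $>n$ with $\sigma'(y\on n)=1$ without violating the order constraint ``$\sigma'(z\on k)\le z(k)$'' for $z\in F$ (or $F\cup\{y\}$) and $k\in[|\sigma|,|\sigma'|)$. The only obstruction is that $y\on n = z\on n$ for some $z$ with $z(n)=0$. For (4) with $y\in 2^\omega\setminus\zero$, first enlarge $F$ to $F\cup\{y\}$; since $F$ is finite and every $z\ne y$ in it differs from $y$ at some fixed position, for sufficiently large $n$ with $y(n)=1$ (which exist as $y\notin\zero$) the equality $y\on n=z\on n$ forces $z=y$, and then $z(n)=y(n)=1$. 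Setting $\sigma'(y\on n)=1$ and $\sigma'(s)=0$ at all other new arguments satisfies every constraint. Item (5) for $y\notin\zero$ is identical.

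The main obstacle is item (5) for $y\in\zero$: here we cannot add $y$ to $F$ in $\pst$. The saving grace is the $\pst$-specific restriction $F\subseteq 2^\omega\setminus\zero$: for each $z\in F$ there is some $m_z$ greater than the last $1$ of $y$ with $z(m_z)=1$, so for any $n>\max_{z\in F}m_z$ one has $z\on n\ne y\on n$ automatically. Then $y\on n$ is not of the form $z\on k$ for any $z\in F$ and $k\in[|\sigma|,n+1)$, so $\sigma'(y\on n)=1$ is unconstrained; set the remaining new values of $\sigma'$ to $0$ to trivially meet the $F$-constraints. Density is thus verified in both $\pst$ and $\pstt$.
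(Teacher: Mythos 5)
Your proposal is correct and follows essentially the same route as the paper: items (1)--(3) are handled by the obvious centering/density observations, and items (4)--(5) by exactly the density argument the paper uses, including the same key point for the delicate case $y\in\zero$ in $\pst$ (that $F\subseteq 2^\omega\setminus\zero$ forces $y\on n\neq z\on n$ for all $z\in F$ and large $n$, so $\sigma'(y\on n)=1$ is unconstrained). The paper writes out only items (4) and (5), and your treatment of those matches its proof.
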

		\begin{proof}
		    The first three items are easy to prove, so we show the remaining two. In both cases, for $y\in2^\omega\setminus\zero$ and $m<\omega$, there are densely many $(\sigma,F)$ satisfying that there is $n>m$ such that $y(n)=1$, $y\on n\neq z\on n$ for all different $z\in F$, $|\sigma|>n$ and $\sigma(y\on n)=1$.
		In the case of $\pst$, 
		for $y\in\zero$ and $m<\omega$
		there are densely many $(\sigma,F)$ satisfying that there is $n> m$ such that $y\on n\neq z\on n$ for all $z\in F$, $n<|\sigma|$ and $\sigma(y\on n)=1$, since $y\notin F$. Now, we are done.
		\end{proof}

		\begin{cor}
			For $y\in2^\omega$, $\Vdash_{\pstt} y\trii \sigma_G$ and $\Vdash_{\pst} y\tri \sigma_G$.
			Hence, by iteration, $\pst$ and $\pstt$ increase $\sIstar$ and $\sIwstar$, respectively.
		\end{cor}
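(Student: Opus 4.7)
The plan is to deduce the corollary almost immediately from the five items of the preceding lemma by unpacking the definitions of $\tri$ and $\trii$, and then to explain how a finite-support iteration turns this single-step boosting of a strategy into a genuine lower bound for $\sIstar$ (respectively $\sIwstar$).

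First, I would handle the $\pstt$ case. By definition $y \trii x$ holds iff either $y \in \zero$, or $x \in 2^\omega \setminus \zero$ together with $y \rp x$. For $y \in \zero$ the conclusion $y \trii \sigma_G$ is automatic. For $y \in 2^\omega \setminus \zero$, item (4) of the previous lemma forces $\sigma_G*y \in 2^\omega \setminus \zero$, while item (2) forces $y \rp_0 \sigma_G$, which gives $y \rp \sigma_G$ since $\rp_0 \subseteq \rp$. Combining these yields $y \trii \sigma_G$.

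The $\pst$ case is symmetric. For $y \in 2^\omega \setminus \zero$, items (3) and (5) together give $y \rp \sigma_G$ and $\sigma_G*y \in 2^\omega \setminus \zero$, which is exactly $y \tri \sigma_G$. For $y \in \zero$, item (5) again gives $\sigma_G*y \in 2^\omega \setminus \zero$, and the remark in the definition of $\rp$ records that $y \rp x$ holds vacuously whenever $y \in \zero$; hence $y \tri \sigma_G$ once more.

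For the ``hence, by iteration'' clause, I would argue as follows. Fix a regular uncountable cardinal $\kappa$ and let $\langle \P_\alpha, \qd_\alpha : \alpha < \kappa \rangle$ be a finite-support iteration where $\qd_\alpha$ is forced to be $\pst$ as computed in $V^{\P_\alpha}$ (respectively $\pstt$). Item (1) of the previous lemma ensures each $\qd_\alpha$ is $\sigma$-centered, so the whole iteration is ccc. Given any family $F \subseteq 2^\omega$ of size $<\kappa$ in $V^{\P_\kappa}$, the usual reflection argument using ccc and finite support produces some $\beta < \kappa$ with $F \in V^{\P_\beta}$. Applying the first part of the corollary inside $V^{\P_\beta}$, the generic strategy $\sigma_G$ added by $\qd_\beta$ satisfies $y \tri \sigma_G$ (respectively $y \trii \sigma_G$) for every $y \in F$. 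Therefore $F$ cannot witness $\sIstar$ (respectively $\sIwstar$) in $V^{\P_\kappa}$, giving $\sIstar \ge \kappa$ (respectively $\sIwstar \ge \kappa$) in the extension.

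There is no substantive obstacle: every ingredient is already isolated in the previous lemma and in the notational remarks of Section 2. The only point that deserves a sentence of care is the $y \in \zero$ case for $\pst$, where one must recall that $\rp$ is defined so that $y \rp x$ holds vacuously as soon as $y^{-1}(\{1\})$ is finite.
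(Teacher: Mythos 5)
Your proposal is correct and takes essentially the same route the paper intends: the corollary is stated without proof precisely because it follows by combining items (2)--(5) of the preceding lemma with the definitions of $\tri$ and $\trii$ (including the observation that $y\rp x$ holds vacuously for $y\in\zero$, which you correctly flag as the only delicate point in the $\pst$ case). The ``by iteration'' clause is likewise the standard finite-support ccc reflection argument you describe, which is what the paper uses implicitly in its consistency results.
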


  \begin{rmk}
      $\pst$ and $\pstt$ are not the exact posets trying to add a generic strategy $\sigma_G$ such that $y\rp\sigma_G$ for all $y\in2^\omega$, but they are for $y\rp_0\sigma_G$ and in this sense they are defined naturally to some extent.
      We are not sure if there are exact natural ones for $y\rp\sigma_G$. 
  \end{rmk}

\begin{lem}
\label{lem:pstt_has_Frlimits}
	$\pstt$ is $\sigma$-$\mathrm{Fr}$-linked.
\end{lem}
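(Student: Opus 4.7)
My plan is to exhibit a countable partition
\[
\pstt=\bigcup_{\sigma\in\fstr,\,k<\omega}Q_{\sigma,k},\qquad Q_{\sigma,k}:=\{(\sigma,F)\in\pstt:|F|=k\},
\]
and show each $Q_{\sigma,k}$ is Fr-linked; since $\fstr\times\omega$ is countable, this yields $\sigma$-Fr-linkedness.

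Fix once and for all a non-principal ultrafilter $\ultU$ on $\omega$. For a sequence $\bar q=\langle(\sigma,F_m):m<\omega\rangle\in Q_{\sigma,k}^\omega$ I define $\lim\bar q$ as follows. Enumerate each $F_m$ in the canonical lexicographic order as $F_m=\{y_0^m<_{\mathrm{lex}}\cdots<_{\mathrm{lex}}y_{k-1}^m\}$; for each pair $(i,j)\in k\times\omega$ let $y_i^*(j)\in 2$ be the $\ultU$-limit of the bits $\langle y_i^m(j):m<\omega\rangle$; put $F^*:=\{y_i^*:i<k\}\in[2^\omega]^{<\omega}$ and declare $\lim\bar q:=(\sigma,F^*)\in\pstt$. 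Lexicographic ordering gives a canonical enumeration that depends only on $\bar q$, and restricting to $|F|=k$ guarantees enumerations of uniform length $k$, which is what makes the coordinate-wise ultrafilter limit well-defined as a function on $Q_{\sigma,k}^\omega$.

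To verify $(\sigma,F^*)\Vdash|\dot W(\bar q)|=\omega$, I take arbitrary $q=(\sigma',F')\leq(\sigma,F^*)$ and arbitrary $m_0<\omega$ and exhibit $m\geq m_0$ for which $(\sigma',F'\cup F_m)$ is a common extension of $q$ and $(\sigma,F_m)$. The crux is the set
\[
A:=\{m<\omega:y_i^m\on|\sigma'|=y_i^*\on|\sigma'|\text{ for every }i<k\};
\]
by choice of the $y_i^*$'s it is a finite intersection of $\ultU$-sets, hence lies in $\ultU$ and in particular is infinite, so some such $m\geq m_0$ exists. For any $m\in A$, the compatibility inequality $\sigma'(y\on n)\leq y(n)$ for $n\in[|\sigma|,|\sigma'|)$, which the assumption $q\leq(\sigma,F^*)$ supplies with $y=y_i^*$, transfers verbatim to $y=y_i^m$ by agreement on $|\sigma'|$, giving $(\sigma',F'\cup F_m)\leq(\sigma,F_m)$ as needed.

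The only real obstacle is conceptual rather than technical: the naive attempt using the coarser components $Q_\sigma:=\{(\sigma,F):F\in[2^\omega]^{<\omega}\}$ with limit $(\sigma,\emptyset)$ fails, because a generic extension of $(\sigma,\emptyset)$ can pick a partial strategy that conflicts with every $F_m$ simultaneously. The ultrafilter trick lets $F^*$ encode a ``typical'' finite profile of the $F_m$'s and forces the extension of $(\sigma,F^*)$ to respect that profile, but only after cutting down to sequences whose $F_m$'s share a common cardinality so that the enumerations can be matched position-by-position.
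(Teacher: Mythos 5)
Your proof is correct and follows essentially the same route as the paper's: the same countable decomposition of $\pstt$ into pieces $Q_{\sigma,k}$ with the strategy part and the size of $F$ fixed, and the same coordinatewise ultrafilter limit of the finite sets $F_m$ (exploiting compactness of $2^\omega$) to define $\lim\bar q=(\sigma,F^*)$. The only cosmetic difference is that the paper verifies the stronger property of $D$-lim-linkedness via the characterization $(\star)_n$ of Fact \ref{fact:chara_UF_linked}, whereas you check the Fr-linked condition directly by a density argument below $(\sigma,F^*)$; both hinge on the same observation that agreement of $y_i^m$ with $y_i^*$ up to $|\sigma'|$ holds for an ultrafilter (hence infinite) set of $m$.
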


To prove this lemma, we show a stronger property using ultrafilters introduced in \cite{goldstern2016left}, since using ultrafilters will make it easier to understand what is going on for $\pst$ and $\pstt$ (see also Remark \ref{rmk:compact}).
\begin{defi}
    Let $D$ be a non-principal ultrafilter on $\omega$ and $\P$ be a poset. $Q\subseteq \P$ is $D$-lim-linked if there exist a function $\lim^D\colon Q^\omega\to\P$ and a $\P$-name $\dot{D}^\prime$ of an ultrafilter extending $D$ such that for any countable sequence $\bar{q}=\langle q_m:m<\omega\rangle\in Q^\omega$, 
	\begin{equation}
		\textstyle{\lim^D\bar{q}} \Vdash \dot{W}(\bar{q})\in\dot{D}^\prime.
	\end{equation}
 Note that if $Q\subseteq \P$ is $D$-lim-linked, then $Q$ is Fr-linked.
\end{defi}

The fourth author gave a forcing-free characterization of $D$-lim-linkedness in \cite{Yam24}:
\begin{fact}[{\cite[Lemma 3.28]{Yam24}}]
    \label{fact:chara_UF_linked}
			Let $D$ be a non-principal ultrafilter on $\omega$ and $\P$ be a poset, $Q\subseteq\P$, $\lim^D\colon Q^\omega\to\P$.
			Then, the following are equivalent:
			\begin{enumerate}
				\item $\lim^D$ witnesses $Q$ is $D$-lim-linked.\label{item_lim_1}
				\item $\lim^D$ satisfies $(\star)_n$ below for all $n<\omega$:\label{item_lim_2}
				\begin{align*}
					(\star)_n:& \text{``Given }\bar{q}^j=\langle q_m^j:m<\omega\rangle\in Q^\omega\text{ for }j<n\text{ and }r\leq\textstyle{\lim^D}\bar{q}^j\text{ for all }j<n,\\
					&\text{then }\{m<\omega:r \text{ and all }q_m^j \text{ for } j<n \text{ have a common extension}\}\in D\text{''}.
				\end{align*}
			\end{enumerate}
\end{fact}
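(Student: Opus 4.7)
The equivalence will be proved by showing both implications. The direction $(1)\Rightarrow(2)$ is a short forcing computation from the definition, while $(2)\Rightarrow(1)$ requires constructing the witnessing ultrafilter name from the combinatorial property $(\star)_n$.

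For $(1)\Rightarrow(2)$: Assume $\lim^D$ is witnessed by a $\P$-name $\dot{D}'$ of an ultrafilter extending $D$. Given $\bar{q}^j$ for $j<n$ and $r\leq\lim^D\bar{q}^j$ for all $j<n$, the condition $r$ forces each $\dot{W}(\bar{q}^j)\in\dot{D}'$, hence $r\Vdash\bigcap_{j<n}\dot{W}(\bar{q}^j)\in\dot{D}'$. Let $A$ denote the set appearing in $(\star)_n$; suppose toward contradiction $A\notin D$, so $\omega\setminus A\in D\subseteq\dot{D}'$. Then $r$ forces $(\omega\setminus A)\cap\bigcap_{j<n}\dot{W}(\bar{q}^j)\in\dot{D}'$ to be non-empty, so by density one finds $r'\leq r$ and $m\in\omega\setminus A$ with $r'\Vdash q_m^j\in\dot{G}$ for all $j<n$. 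An iterative application of compatibility (pick $r'_0\leq r'$ with $r'_0\leq q_m^0$, then $r'_0$ still forces $q_m^j\in\dot{G}$ for $j\geq1$, and continue) yields $r''\leq r'$ with $r''\leq q_m^j$ for each $j<n$, so $r''$ is a common extension of $r$ and all $q_m^j$, witnessing $m\in A$ — a contradiction.

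For $(2)\Rightarrow(1)$: In a generic extension $V[G]$, define $F_G$ as the filter generated by $D\cup\{\dot{W}(\bar{q})^{V[G]}:\bar{q}\in Q^\omega\text{ and }\lim^D\bar{q}\in G\}$. The key step is verifying that $F_G$ is proper, argued directly in the forcing sense: given $r\in\P$, $X_0\in D$, and $\bar{q}^j$ ($j<n$) with $r\leq\lim^D\bar{q}^j$, apply $(\star)_n$ to obtain $A\in D$; then $A\cap X_0\in D$ is non-empty, and for any $m\in A\cap X_0$ a common extension $r'$ of $r$ and all $q_m^j$ forces $m\in X_0\cap\bigcap_{j<n}\dot{W}(\bar{q}^j)$, so this intersection cannot be forced empty. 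Once properness is established, Zorn's lemma extends $F_G$ to an ultrafilter in $V[G]$, and the Maximum Principle packages this choice into a $\P$-name $\dot{D}'$ forced to be an ultrafilter containing $D$; by construction $\lim^D\bar{q}\Vdash\dot{W}(\bar{q})\in F_G\subseteq\dot{D}'$.

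I expect the main obstacle to be the properness verification in $(2)\Rightarrow(1)$: the filter $F_G$ mixes ground-model sets (members of $D$) with generic-dependent sets ($\dot{W}(\bar{q})^{V[G]}$), and the translation between $V$ and $V[G]$ via the common-extension property is precisely the content of $(\star)_n$. The remaining passages — Zorn's lemma in the extension and the packaging into a $\P$-name — are routine, as is the bookkeeping in the converse direction.
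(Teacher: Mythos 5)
The paper does not actually prove this statement: it is imported as a Fact with a citation to the fourth author's earlier work (Lemma 3.28 of the cited reference), so there is no in-paper proof to compare against. Judged on its own, your argument is correct and is the standard proof of this equivalence. In $(1)\Rightarrow(2)$, the key passage from $r'\Vdash q_m^j\in\dot{G}$ for all $j<n$ to an actual common extension of $r$ and the $q_m^j$ by iterated compatibility is exactly right (using that $r'\Vdash q\in\dot{G}$ iff every extension of $r'$ is compatible with $q$). In $(2)\Rightarrow(1)$, the finite-intersection-property verification for $F_G$ is indeed the heart of the matter and your use of $(\star)_n$ there is correct. The one point you should make explicit: the generators of $F_G$ are the sets $\dot{W}(\bar{q})^{V[G]}$ for those $\bar{q}$ with $\lim^D\bar{q}\in G$, so the density argument has to start from a condition $p$ that merely \emph{forces} $\lim^D\bar{q}^j\in\dot{G}$ for each $j<n$ (and forces the relevant intersection empty); before $(\star)_n$ can be applied you must first extend $p$ to a condition $r$ lying below every $\lim^D\bar{q}^j$, which is available by the same iterated-compatibility step you already used in the forward direction. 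With that sentence added, the proof is complete; the Zorn's lemma extension in $V[G]$ and the packaging into a name $\dot{D}'$ via the maximum principle are routine, as you say.
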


\begin{proof}[Proof of Lemma \ref{lem:pstt_has_Frlimits}]
	Fix $\sigma\in\fstr$ and $k<\omega$.
	Let $Q=Q_{\sigma,k}\coloneq\{(\sigma^\prime, F^\prime)\in\pstt:\sigma^\prime=\sigma, |F|\leq k\}$ and $D$ a non-principal ultrafilter on $\omega$.
	We show $Q$ is $D$-lim-linked.
	Let $\bar{q}=\langle q_m=(\sigma,\{y^m_i:i<k\})\rangle_{m<\omega}\in Q^\omega$.
	For $i<k$, define $y^\infty_i\in2^\omega$ by:
	\begin{equation}
		y^\infty_i(n)=j:\Leftrightarrow\{m<\omega:y^m_i(n)=j\}\in D,
	\end{equation}
    for $n<\omega$ and $j<2$.
	Let $q^\infty=\lim^D\bar{q}\coloneq(\sigma,\{y^\infty_i:i<k\})$.
	We first show $(\star)_1$ and then $(\star)_n$ for $n\geq2$ in Fact \ref{fact:chara_UF_linked}.
 
	\textit{Case $(\star)_1$}: Let $r=(\sigma^\prime,F^\prime)\leq q^\infty=(\sigma,\{y^\infty_i:i<k\})$.
		Fix $n\in\left[|\sigma|,|\sigma^\prime|\right)$ and $i<k$.
		Since $r\leq q^\infty$, $\sigma^\prime(y^\infty_i\on n)(n)\leq y^\infty_i(n)$ holds.
		Let $X_{n,i}\coloneq\{m<\omega:y^\infty_i\on(n+1)=y^m_i\on(n+1)\}\in D$.
		Unfix $n,i$ and let $X\coloneq\bigcap_{n,i}X_{n,i}\in D$.
		For any $m\in X$, $n\in\left[|\sigma|,|\sigma^\prime|\right)$ and $i<k$, $\sigma^\prime(y^m_i\on n)(n)=\sigma^\prime(y^\infty_i\on n)(n)\leq y^\infty_i(n)=y^m_i(n)$, so $r_m\coloneq(\sigma^\prime,F_m\cup F^\prime)$ is a common extension of $r$ and $q_m$ for $m\in X$.
		
	\textit{Case $(\star)_n$}: Let $n\geq2$ and assume $(\star)_1$ holds.
		Let $r=(\sigma^\prime,F^\prime)$ and $q_m^j=(\sigma,F^j_m)$ for $m<\omega$ and $j<n$.
		By $(\star)_1$, $Y_j\coloneq\{m<\omega:r\text{ and }q^j_m \text{ have a common extension }\}\in D$ for $j<n$.
		For $m\in\bigcap_{j<n}Y_j(\in D)$, $r_m\coloneq(\sigma^\prime,\bigcup_{j<n}F^j_m\cup F^\prime)$ is a common extension of $r$ and all $q^j_m$ for $j<n$. 
\end{proof}

\begin{rmk}\label{rmk:compact}
	In the proof above, $y^\infty_i$ is the $D$-limit point of the sequence $\langle y^m_i:m<\omega\rangle$ in Cantor space $2^\omega$ in the topological sense: for any neighborhood $U$ of the limit point $y^\infty_i$, $\{m<\omega:y^m_i\in U\}\in D$ holds.
	Note that every compact (Hausdorff) space has a (unique) $D$-limit (see e.g. \cite[Lemma 3.6]{brendle2021filter}).
	In the case of $\pst$, $2^\omega\setminus\zero$ is not compact and hence the same proof does not work (Actually, Main Lemma \ref{mainlem} implies $\pst$ is not $\sigma$-$\mathrm{Fr}$-linked).
\end{rmk}


Fr-limits keep $\sIstar$ small:
\begin{mainlem}
	\label{mainlem}
	Let $\kappa$ be uncountable regular, $\gamma>\kappa$ a limit ordinal and $\P_\gamma$ a $<\kappa$-Fr-iteration whose first $\kappa$-many iterands are Cohen forcings $\mathbb{C}$.
	Then, $\Vdash_{\P_\gamma}\sIstar\leq\kappa$.
\end{mainlem}

\begin{proof}
	We show that the first $\kappa$-many Cohen reals $\langle\dot{c}_\a:\a<\kappa\rangle$ (as members of  $2^\omega$)  witness $\Vdash_{\P_\gamma} \sIstar\leq\kappa$, by focusing on the characterization $\sIstar=\frakb(\rs_\infty)$.  Assume towards contradiction that there are $p\in\P_\gamma$ and a $\P_\gamma$-name $\dot{\sigma}$ of an element of $\str_\infty$ such that for all $\a<\kappa$, $p\Vdash \dot{c}_\a \tri\dot{\sigma} $, particularly $p\Vdash \dot{c}_\a \rp\dot{\sigma}$. 
Since $\rp=\bigcup_{j\in2}\bigcup_{n<\omega}\rp_{j,n}$,  for $\a<\kappa$ we obtain  $p_\a\leq p$, 
$j_\a\in 2$ and $n_\a<\omega$ such that $p_\a\Vdash \dot{c}_\a\rp_{j_\a,n_\a}\dot{\sigma}$. 
By extending and thinning, we may assume there is $I\in[\kappa]^{\kappa}$ such that: 

\begin{enumerate}
	\item $\a\in\dom(p_\a)$  for $\a\in I$. 
	\item $\{p_\a:\a\in I\}$ forms a uniform $\Delta$-system with root $\nabla $. 
	\item All $j_\a$ are equal to $ j$ and all $n_\a$ are equal to $ n^*$.
	\item All $p_\a(\a)$ are the same Cohen condition $s\in\sqtwo$. 
	\item $|s|=n^*$. (By increasing $n^*$ or extending $s$.)
\end{enumerate}

In particular, we have that:
\begin{equation}
	\label{eq_property_of_refined_pi}
	\text{For each }\a\in I, ~p_\a\text{ forces }\dot{c}_{\a}\on n^*=s\text{ and }\dot{c}_{\a}\rp_{j,n^*}\dot{\sigma}.
\end{equation}

Pick some countable $\{\a_0<\a_1<\cdots\}\in[I\setminus\nabla]^\omega$. 
For $m<\omega$,
define $q_m\leq p_{\a_m}$ by extending the $\a_m$-th position to $q_m(\a_m):=s^\frown\langle j\rangle^m\in2^{n^*+m}$.
By \eqref{eq_property_of_refined_pi}, for all $m<\omega$ we have:
\begin{equation}
	\label{eq_qi}
	q_m\Vdash \text{ for all }n<m,~\dot{\sigma}(s^\frown\langle j\rangle^n)=0.
\end{equation}

Since $\a_m\notin\nabla$, $\bar{q}\coloneq\langle q_m:m<\omega\rangle$ also forms a uniform $\Delta$-system (with root $\nabla$) and hence we can take the limit $q^\infty\coloneq\lim\bar{q}$.

Let $y\coloneq s^\frown \langle j\rangle^\omega\in\zero\cup\one$. 
Since $q^\infty\Vdash \exists^\infty m<\omega~q_m\in\dot{G}$ and by \eqref{eq_qi}, we have:
\begin{equation}
	\label{eq_contra}
	q^\infty\Vdash\dot{\sigma}*y\in\zero,
\end{equation}

which contradicts $\dot{\sigma}\in\str_\infty$.
\end{proof}


\begin{thm}
\label{thm:Con_sIstar<sIwstar}
    Let $\kappa\leq\lambda$ be uncountable regular cardinals. Then, it is consistent that $\kappa=\sIstar\leq\sIwstar=\lambda$ holds.
\end{thm}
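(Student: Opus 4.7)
The plan is to perform a finite support iteration $\P_\lambda=\langle(\P_\xi,\qd_\xi):\xi<\lambda\rangle$ of length $\lambda$ over a ground model $V_0\models\CH$ (with enough GCH below $\lambda$ for the bookkeeping). Set $\qd_\xi=\mathbb{C}$ for $\xi<\kappa$; for $\kappa\leq\xi<\lambda$, a bookkeeping function picks $\qd_\xi$ to be either $\pstt$, aimed at a $\P_\lambda$-name of a family in $[2^\omega]^{<\lambda}$ (to lift $\sIwstar$ to $\lambda$), or the restricted forcing
\[
\pst_{\scrA}\coloneqq\{(\sigma,F)\in\pst:F\subseteq\scrA\}
\]
for a $\P_\xi$-name $\dot\scrA$ of a family in $[2^\omega\setminus\zero]^{<\kappa}$ (to keep $\sIstar\geq\kappa$), with both kinds of names enumerated cofinally in $\lambda$.

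Because $\mathbb{C}$ and $\pstt$ are $\sigma$-Fr-linked by Lemmas \ref{lem:poset_is_its_size_Frlinked} and \ref{lem:pstt_has_Frlimits}, and because $|\pst_{\scrA}|=|\scrA|<\kappa$ makes $\pst_{\scrA}$ a $<\kappa$-Fr-linked iterand via Lemma \ref{lem:poset_is_its_size_Frlinked}, the whole iteration $\P_\lambda$ is a $<\kappa$-Fr-iteration whose first $\kappa$ iterands are Cohen. Main Lemma \ref{mainlem} therefore gives $\Vdash_{\P_\lambda}\sIstar\leq\kappa$. Cardinal arithmetic produces $\frakc=\lambda$, so $\sIwstar\leq\lambda$, and the $\pstt$-bookkeeping yields $\sIwstar\geq\lambda$ in the standard way: any $\P_\lambda$-name $\dot\scrA$ for a family in $[2^\omega]^{<\lambda}$ is a $\P_\alpha$-name for some $\alpha<\lambda$ by ccc, and some later $\pstt$-stage $\beta$ adds a strategy $\sigma_{G_\beta}$ with $y\trii\sigma_{G_\beta}$ for all $y\in\scrA$.

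For $\sIstar\geq\kappa$, fix $\scrA\in[2^\omega]^{<\kappa}$ in $V^{\P_\lambda}$. The bookkeeping treats $\dot\scrA\setminus\zero$ at a $\pst_{\dot\scrA\setminus\zero}$-stage $\beta$, and, by the same density argument as for $\pst$, the generic $\sigma_{G_\beta}$ lies in $\str_\infty$ and satisfies $y\tri\sigma_{G_\beta}$ for every $y\in\scrA\setminus\zero$; since $\sigma_{G_\beta}\in\str_\infty$ trivially beats every $y\in\zero$ in splitting*, it beats all of $\scrA$, so $\scrA$ is $\rs$-bounded. The hard part will be ruling out that the iteration itself is $\sigma$-Fr-linked: if it were, Main Lemma \ref{mainlem} applied with $\kappa'=\aleph_1$ would force $\sIstar\leq\aleph_1$, collapsing the separation whenever $\kappa>\aleph_1$. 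Concretely, one needs that $\pst_{\scrA}$ fails to be $\sigma$-Fr-linked for at least one uncountable $\scrA$ handled by the bookkeeping, in analogy with the remark after Lemma \ref{lem:pstt_has_Frlimits} that $\pst$ itself is not $\sigma$-Fr-linked; this is exactly the structural feature that lets $\pst_{\scrA}$-iterands push $\sIstar$ up while still being admissible in the $<\kappa$-Fr framework.
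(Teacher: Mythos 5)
Your construction is essentially the paper's: a finite support $<\kappa$-Fr-iteration starting with $\kappa$ Cohen reals, followed by a bookkeeping mix of $\pstt$ (to push $\sIwstar$ up to $\lambda$) and small subforcings of $\pst$ (to push $\sIstar$ up to $\kappa$), with Main Lemma \ref{mainlem} giving $\sIstar\leq\kappa$. Two points of divergence are worth flagging. First, the paper iterates not up to $\lambda$ but up to a cardinal $\mu>\lambda$ with $\mu^{<\kappa}=\mu$ and $\cf(\mu)=\lambda$; the hypothesis $\mu^{<\kappa}=\mu$ is exactly what makes the bookkeeping of all $<\kappa$-sized families possible without your ad hoc ``enough GCH below $\lambda$'' (you need $\lambda^{<\kappa}=\lambda$), and $\sIwstar\leq\lambda$ is then read off from $\sIwstar\leq\nonm\leq\cf(\mu)=\lambda$ rather than from $\frakc=\lambda$. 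Your length-$\lambda$ version also degenerates in the admissible case $\kappa=\lambda$, where the range $[\kappa,\lambda)$ of non-Cohen stages is empty and you are left with the Cohen model, in which $\sIwstar\leq\nonm=\aleph_1$.

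The more substantive problem is your final paragraph. There is no ``hard part'' of the kind you describe, and nothing needs to be ruled out. The inequality $\sIstar\geq\kappa$ is obtained \emph{directly}: every $\scrA\in[2^\omega]^{<\kappa}$ of the final model lives in some intermediate model, the bookkeeping schedules a $\pst_{\scrA}$-stage after it, and the density arguments give a single $\sigma$ with $y\tri\sigma$ for all $y\in\scrA$. The Main Lemma is an implication whose hypothesis (``$\P_\gamma$ \emph{is presented as} a $<\kappa'$-Fr-iteration'') you only have to verify when you wish to apply it; you apply it once, with the $<\kappa$-structure you actually built. You are under no obligation to prove that the iteration admits no $\sigma$-Fr-iteration presentation --- the direct lower bound $\sIstar\geq\kappa>\aleph_1$ already shows, a posteriori, that it cannot (and this is precisely how the paper, in Remark \ref{rmk:compact}, \emph{deduces} that $\pst$ is not $\sigma$-Fr-linked from the Main Lemma, rather than needing it as an input). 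So delete that paragraph: with the cardinal arithmetic fixed as above, your proof is already complete without it.
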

\begin{proof}
    Let $\mu>\lambda$ be such that $\mu^{<\kappa}=\mu$ and $\cf(\mu)=\lambda$ (e.g., construct the continuous sequence $\langle \mu_\a:\a\leq\lambda\rangle$ by $\mu_0\coloneq\lambda$ and $\mu_{\a+1}\coloneq(\mu_\a^{<\kappa})^+$ and put $\mu\coloneq\mu_\lambda$).
    Let $\P=\P_\gamma$ be a $<\kappa$-Fr-iteration of length $\gamma=\mu$ whose first $\kappa$-many iterands are Cohen forcings and each of the rest is either $\pstt$ or a subforcing of $\pst$ of size $<\kappa$ (by bookkeeping), which is possible by Lemma \ref{lem:pstt_has_Frlimits} (and Lemma \ref{lem:poset_is_its_size_Frlinked}). It is easy to see that $\P$ forces $\lambda\leq\sIwstar\leq\nonm\leq\lambda$ by $\cf(\mu)=\lambda$ and $\sIstar\geq\kappa$ by bookkeeping (and $\mu^{<\kappa}=\mu$). Main Lemma \ref{mainlem} implies $\Vdash_\P \sIstar\leq\kappa$.
\end{proof}

\begin{thm}\label{thm:separation_of_three}
    Let $\theta\leq\kappa\leq\lambda$ be uncountable regular cardinals such that $\kappa$ is $\aleph_1$-inaccessible, i.e., $\mu<\kappa$ implies $\mu^{\aleph_0}<\kappa$.
    Then, it is consistent that $\mathfrak{s}=\theta\leq\sIstar=\kappa\leq\sIwstar=\lambda$ holds.
\end{thm}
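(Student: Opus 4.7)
The plan is to adapt the construction of Theorem \ref{thm:Con_sIstar<sIwstar} by working over a ground model in which $\mathfrak{s}=\theta$ already holds, and then verifying that the main $<\kappa$-Fr-iteration preserves this value. First I would start with $V_0\models\mathrm{GCH}$ and perform a preparatory ccc iteration of length $\theta$ forcing $\mathfrak{s}=\mathfrak{s}_\sigma=\theta$ and $\frakc=\theta$, yielding an intermediate model $V_1$. Then in $V_1$ I would carry out the $<\kappa$-Fr-iteration $\P=\P_\mu$ exactly as in Theorem \ref{thm:Con_sIstar<sIwstar}: choose $\mu>\lambda$ with $\mu^{<\kappa}=\mu$ and $\cf(\mu)=\lambda$, let the first $\kappa$ iterands be Cohen forcings $\mathbb{C}$, and let the remaining iterands be $\pstt$ or size-$<\kappa$ subforcings of $\pst$ chosen by bookkeeping. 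The conclusions $\Vdash_\P\sIstar=\kappa$ and $\Vdash_\P\sIwstar=\lambda$ follow from Main Lemma \ref{mainlem}, Lemma \ref{lem_sIwstarleqnonm}, $\cf(\mu)=\lambda$, and bookkeeping, exactly as in the proof of Theorem \ref{thm:Con_sIstar<sIwstar}.

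The new content is to establish $\Vdash_\P\mathfrak{s}=\theta$. For the upper bound $\mathfrak{s}\leq\theta$, I would check that each iterand preserves splitting families: this is classical for Cohen, and for $\pst$ and $\pstt$ it is extracted from the ultrafilter-based $D$-lim-linked structure appearing in the proof of Lemma \ref{lem:pstt_has_Frlimits}, and then propagated through the FS iteration by the standard preservation theorems for Fr-linked posets (this is essentially the content of Remark \ref{rmk:preservingssigma}). Consequently, any splitting family of size $\theta$ from $V_1$ remains splitting in $V_1^\P$. For the lower bound $\mathfrak{s}\geq\theta$, any family $\mathcal{A}\in V_1^\P$ of size $<\theta$ is, by the ccc of $\P$ and the regularity of $\theta$, captured in some intermediate model $V_1^{\P_\beta}$; the $\aleph_1$-inaccessibility of $\kappa$ bounds the relevant name sizes, and the inductive preservation of $\mathfrak{s}\geq\theta$ at intermediate stages then ensures that $\mathcal{A}$ fails to be splitting in $V_1^{\P_\beta}$ and hence in $V_1^\P$.

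The main obstacle is verifying that $\pst$ and $\pstt$ preserve splitting families from the ground model. The key point is that the $D$-limit structure from Lemma \ref{lem:pstt_has_Frlimits} provides enough uniform control over the generic strategy $\sigma_G$ to prevent it from encoding a real unsplit by any element of a ground-model splitting family; this is the natural specialization of Mej\'{\i}a's general preservation framework for Fr-linked posets, which is exactly what is pinpointed by Remark \ref{rmk:preservingssigma}.
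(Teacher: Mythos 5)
Your overall architecture (prepare a model with a small splitting family, then run the iteration of Theorem \ref{thm:Con_sIstar<sIwstar}) matches the paper's in outline, but both halves of the new content --- keeping $\mathfrak{s}\leq\theta$ and forcing $\mathfrak{s}\geq\theta$ --- rest on claims that do not hold as stated, and you have misidentified the preservation mechanism. For the upper bound, it is not justified (and in general not true) that an arbitrary ground-model splitting family survives Cohen forcing, $\pst$ and $\pstt$, and the $D$-lim-linked structure of Lemma \ref{lem:pstt_has_Frlimits} is not what does the work: in this paper Fr-/UF-limits are used only to keep $\sIstar$ small (Main Lemma \ref{mainlem}), not to preserve splitting families. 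What the paper actually does is add, as a first step, a \emph{specially constructed} splitting family $\mathcal{S}$ of size $\theta$ via the forcing of \cite[Section 4]{Goldstern2021}; this $\mathcal{S}$ has the strong robustness property (\cite[Theorem 4.19]{Goldstern2021}) that after any further suitable iteration of \emph{Suslin ccc} posets, for every $a\in\ooo$ only $<\theta$ many members of $\mathcal{S}$ fail to split $a$. The preservation thus hinges on $\pst$ and $\pstt$ being Suslin ccc and on the subforcings of $\pst$ having the form of \cite[Definition 5.1(S2)(iii)]{Goldstern2021} --- which is where the $\aleph_1$-inaccessibility of $\kappa$ is actually used, not, as you suggest, in bounding name sizes for the lower bound. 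Remark \ref{rmk:preservingssigma} points to exactly this Theorem 4.19, not to any Fr-limit argument.

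For the lower bound, your appeal to ``inductive preservation of $\mathfrak{s}\geq\theta$ at intermediate stages'' is circular and in fact fails: already after the initial $\kappa$ many Cohen iterands the intermediate models contain splitting families of size $\aleph_1$ (the first $\aleph_1$ Cohen reals), so $\mathfrak{s}$ has dropped below $\theta$ there. Capturing a small family $\mathcal{A}$ in an intermediate model accomplishes nothing unless the remainder of the iteration actively adds a real unsplit by $\mathcal{A}$. The paper arranges this by interweaving, cofinally often, finite support products of all $\sigma$-centered posets of size $<\theta$ (as in \cite[Subsection 6.B(I2)(ii)]{Goldstern2021}), which forces $\mathfrak{p}=\theta$ and hence $\mathfrak{s}\geq\mathfrak{p}\geq\theta$. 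This ingredient is absent from your proposal, so the lower bound is not established.
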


\begin{proof}
    Since $\pst$ and $\pstt$ are Suslin ccc posets, we can use the method in \cite{Goldstern2021}: We first add a splitting family $\mathcal{S}$
    of size $\theta$ by using the forcing notion defined in \cite[Section 4]{Goldstern2021}, and then force with (a forcing-equivalent poset of) $\P_\gamma$ in the previous proof (subforcings of $\pst$ are the form of \cite[Definition 5.1(S2)(iii)]{Goldstern2021} and here we use the $\aleph_1$-inaccessibility of $\kappa$), interweaving cofinally many steps where each iterand is the finite support product of all $\sigma$-centered posets of size $<\theta$ consisting of reals (as seen in \cite[Subsection 6.B(I2)(ii)]{Goldstern2021}).
    Then, since $\pst$ and $\pstt$ are Suslin ccc and by \cite[Theorem 4.19, Lemma 5.5, 5.6, see also Lemma 6.8]{Goldstern2021}, $\mathcal{S}$ stays a splitting family and we have $(\mathfrak{p}=)\mathfrak{s}=\theta\leq\sIstar=\kappa\leq\sIwstar=\lambda$ in the final extension (see \cite{Goldstern2021} for details).
\end{proof}

\begin{rmk}\label{rmk:preservingssigma}
    In the previous extension model, 
    $\mathfrak{s}_\sigma=\mathfrak{s}=\theta$ holds, since the splitting family $\mathcal{S}$ above satisfies that for any $a\in\ooo$, $\{b\in\mathcal{S}: b \text{ does not split }a\}$ has size $<\theta$ (\cite[Theorem 4.19]{Goldstern2021}) and hence $\mathcal{S}$ is also a $\sigma$-splitting family.
\end{rmk}


 Thus, we can say that the hidden structure behind the consistency of $\sIstar<\sIwstar$ is the \textit{compactness} of $\pstt$ as opposed to the \textit{incompactness} of $\pst$ mentioned in Remark \ref{rmk:compact}. As we have seen in Section \ref{sec:intro}, the statement of $\operatorname{Con}(\sIstar<\sIwstar)$ itself sounds slightly strange, but it turns out that the reason why this consistency holds is a consequence of the following obvious fact: \textit{$2^\omega$ is compact but $2^\omega\setminus\zero$ is not.} 
 We will consider this phenomenon again in Question \ref{ques:pair} in the following section.

\section{Questions and discussions}\label{sec:que}
\begin{question}
        Does $\ZFC$ prove $\sIstar \le \non(\mathcal{E})$ (or $\sIwstar \le \non(\mathcal{E})$), where $\mathcal{E}$ denotes the $\sigma$-ideal generated by closed null sets?
    \end{question}


    \begin{question}
        Is there a lower bound of $\sIstar$ other than $\fraks_\sigma$ and $\min\{\sIwstar,\frakb\}$? In particular, is $\add(\nul)$ a lower bound of $\sIstar$?
    \end{question}

    \begin{question}
        Does $\ZFC$ prove $\sIwstar\leq\frakd$?
    \end{question}
    Even though Theorem \ref{thm:sIstar_le_frakd} shows $\sIstar\leq\frakd$, it seems that its proof cannot be easily modified to that of $\sIwstar\leq\frakd$, since the proof relies on the characterization using $\str_\infty$, but not $\str_\infty^\one$.

    \begin{question}
        What is the relationship among the dominating numbers of the four relational systems $\rs, \rss, \rs_\infty, \rss_\infty$?
    \end{question}
    We have the following observations:

    \begin{lem}
        \begin{enumerate}
            \item $\frakd(\rs)\leq\frakd(\rs_\infty)$.
            \item $\frakd(\rss)\leq\frakd(\rss_\infty)$.
            \item $\frakd(\rss_\infty)\leq\frakd(\rs_\infty)$.
            \item $\frakd(\rs)=\frakd(\rss)$.
        \end{enumerate}
    \end{lem}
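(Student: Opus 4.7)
My plan for all four inequalities rests on two trivial observations: the inclusions of strategy classes $\str_\infty \subseteq \str_\infty^\one \subseteq \str$, and the fact that $y \tri x$ implies $y \trii x$ (by Lemma \ref{lem_s_basics}(1), since $y \trii x$ iff $y \tri x$ or $y \in \zero$). Together these let me transport a dominating family from one system to another without changing its cardinality.

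For items (1) and (2), I would simply note that any $\rs_\infty$-dominating family $\Sigma \subseteq \str_\infty$ is automatically an $\rs$-dominating family, since $\str_\infty \subseteq \str$ and both the underlying relation $\tri$ and the left-hand side $2^\omega$ are unchanged; the same proof shows $\frakd(\rss) \leq \frakd(\rss_\infty)$. For (3), any $\rs_\infty$-dominating family $\Sigma \subseteq \str_\infty \subseteq \str_\infty^\one$ also dominates in $\rss_\infty$, because $y \tri \sigma$ gives $y \trii \sigma$.

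The only inequality requiring a minor twist is (4). The direction $\frakd(\rss) \leq \frakd(\rs)$ is immediate from $\tri \subseteq \trii$ combined with the fact that both systems have the same right-hand side $\str$. For the converse $\frakd(\rs) \leq \frakd(\rss)$, given an $\rss$-dominating family $\Sigma$, my plan is to adjoin the single constant-$1$ strategy $\sigma_1$ defined by $\sigma_1(s) = 1$ for every $s \in 2^{<\omega}$, and then show $\Sigma \cup \{\sigma_1\}$ is $\rs$-dominating. For $y \in 2^\omega \setminus \zero$, any $\sigma \in \Sigma$ witnessing $y \trii \sigma$ must actually satisfy $y \tri \sigma$, because the $y \in \zero$ disjunct in the definition of $\trii$ fails. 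For $y \in \zero$, I would verify directly that $y \tri \sigma_1$: one has $\sigma_1 * y = \seq{1}^\omega \in 2^\omega \setminus \zero$, and $\neg(\seq{1}^\omega \rsp y)$ holds because $\rsp$ requires $y^{-1}(\{1\})$ to be infinite. Since $\frakd(\rss) \geq \frakb(\rss) = \sIwstar \geq \fraks_\sigma$ is uncountable, adjoining one strategy does not change the cardinality.

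The main point to be careful about is the separate treatment of $y \in \zero$ in part (4), which the constant-$1$ strategy handles cleanly; no deeper obstacle is anticipated.
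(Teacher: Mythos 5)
Your argument follows the paper's proof almost exactly: items (1)--(3) and the direction $\frakd(\rss)\leq\frakd(\rs)$ come from the inclusions $\str_\infty\subseteq\str_\infty^\one\subseteq\str$ together with the fact that $y\tri x$ implies $y\trii x$, and for $\frakd(\rs)\leq\frakd(\rss)$ the paper adjoins the very same constant-$1$ strategy $\sigma_\one$ to handle the plays $y\in\zero$. The one step I would not accept as written is the cardinality remark at the end: the inequality $\frakd(\mathbf{R})\geq\frakb(\mathbf{R})$ is \emph{not} a general fact about relational systems (take $X=Y=\omega$ with $n\mathrel{R}m$ iff $n\neq m$: then $\frakd=2$ while $\frakb=\aleph_0$), so you cannot simply invoke $\frakd(\rss)\geq\frakb(\rss)=\sIwstar$. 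The conclusion you actually need --- that $\frakd(\rss)$ is infinite, so that adjoining one strategy does not change the cardinality --- is nevertheless easy to obtain by other means: for instance, Corollary \ref{lem_s1game**_ineqs} gives $\{y:y\trii\sigma\}\in\nul$ for every $\sigma\in\str$, so an $\rss$-dominating family covers $2^\omega$ by null sets and hence has size at least $\cov(\nul)>\aleph_0$; alternatively one can diagonalize directly against finitely many strategies. With that small repair your proof is complete and coincides with the paper's.
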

    \begin{proof}
        The first three items and $\frakd(\rs)\geq\frakd(\rss)$ are easy to prove, so we show $\frakd(\rs)\leq\frakd(\rss)$. Given an $\rss$-dominating family $F\subseteq\str$, $F^\prime\coloneq F\cup\{\sigma_\one\}$ is $\rs$-dominating where $\sigma_\one$ is the strategy such that $\sigma_\one(y)=1$ for all $y\in\sqtwo$, and $F^\prime$ has size $|F|$ since $\frakd(\rss)$ is easily proved to be infinite.
    \end{proof}
    Hence, the inequalities are summarized as: $\frakd(\rs)=\frakd(\rss)\leq\frakd(\rss_\infty)\leq\frakd(\rs_\infty)$, while in the case of the bounding numbers we have $\sIstar=\frakb(\rs_\infty)=\frakb(\rs)\leq\frakb(\rss)=\frakb(\rss_\infty)=\sIwstar$.
    Moreover, by considering the ``dual'' case of Theorem \ref{thm:Con_sIstar<sIwstar}, we have:
    \begin{lem}
        Consistently, $\frakd(\rs)=\frakd(\rss)=\frakd(\rss_\infty)<\frakd(\rs_\infty)$ holds.
    \end{lem}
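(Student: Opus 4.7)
The plan is to adapt Theorem \ref{thm:Con_sIstar<sIwstar} by enlarging the initial block of Cohen iterands from $\kappa$ to $\lambda^+$, so that Main Lemma \ref{mainlem}, reapplied to arbitrary $\kappa$-subsets of this wider Cohen family, bounds $\frakd(\rs_\infty)$ from below by $\lambda^+$. Fix uncountable regular cardinals $\kappa\leq\lambda$ and a cardinal $\mu>\lambda^+$ with $\cf(\mu)=\lambda$ and $\mu^{<\kappa}=\mu$, and carry out the $<\kappa$-Fr-iteration $\P=\P_\mu$ whose first $\lambda^+$ iterands are $\mathbb{C}$ and whose remaining iterands alternate $\pstt$ (placed cofinally) with subforcings of $\pst$ of size $<\kappa$ by bookkeeping, exactly as in Theorem \ref{thm:Con_sIstar<sIwstar}. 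All the $<\kappa$-Fr-linked and bookkeeping machinery transfers verbatim.

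To verify $\frakd(\rss_\infty)=\lambda$, I would pick a cofinal sequence $\langle\alpha_i:i<\lambda\rangle$ in $\mu$ of $\pstt$-stages; by ccc every real of $V^\P$ lies in some $V^{\P_{\alpha_i}}$ and is there $\trii$-dominated by $\sigma_{G_{\alpha_i}}\in\str_\infty^\one$, so $\{\sigma_{G_{\alpha_i}}:i<\lambda\}$ is an $\rss_\infty$-dominating family of size $\lambda$. Lemma \ref{lem_sIwstarleqnonm} gives $\frakd(\rss_\infty)\geq\covm$, and $\covm=\cf(\mu)=\lambda$ is a standard fsi fact. The proof of Lemma \ref{lem_sIwstarleqnonm} adapts to $\tri$ and to arbitrary $\sigma\in\str$ (at each basic open $[s]$ a sub-neighborhood violating the splitting condition can be found by taking any $y_0$ in the set and any $m_0\geq\max(n,|s|)$ with $\sigma*y_0(m_0)=1$, then modifying $y_0(m_0)$), hence $\frakd(\rs)=\frakd(\rss)\geq\covm=\lambda$; together with the monotonicity $\frakd(\rs)=\frakd(\rss)\leq\frakd(\rss_\infty)$ from the preceding lemma, all three equal $\lambda$.

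To verify $\frakd(\rs_\infty)\geq\lambda^+$, note that the proof of Main Lemma \ref{mainlem} picks its $\Delta$-system of size $\kappa$ \emph{internally}, so the argument runs with any $\kappa$-sized subset of the Cohen family in place of the first $\kappa$. Hence, for each $\sigma\in\str_\infty$ in $V^\P$, the set $D_\sigma:=\{\alpha<\lambda^+:\dot c_\alpha\tri\sigma\}$ cannot contain any $\kappa$-subset: any such subset would, via the Main Lemma applied to $\sigma$ and that subset, produce a contradictory $y\in\zero\cup\one$ with $\sigma*y\in\zero$. Therefore $|D_\sigma|<\kappa$. If $F\subseteq\str_\infty$ were $\rs_\infty$-dominating in $V^\P$ with $|F|\leq\lambda$, then $\bigcup_{\sigma\in F}D_\sigma\supseteq\lambda^+$, contradicting $|\bigcup_{\sigma\in F}D_\sigma|\leq|F|\cdot\kappa\leq\lambda<\lambda^+$.

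The main obstacle is verifying that the Main Lemma's proof truly applies to an arbitrary $\kappa$-subset of a larger Cohen family — this should be routine, since the $\Delta$-system refinement in its proof is indexing-oblivious, but it deserves careful checking. Combining the three verifications, $\lambda=\frakd(\rs)=\frakd(\rss)=\frakd(\rss_\infty)<\lambda^+\leq\frakd(\rs_\infty)$ in $V^\P$, completing the consistency.
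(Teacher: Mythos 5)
Your proof is correct and realizes exactly the construction the paper alludes to with the phrase ``the dual case of Theorem \ref{thm:Con_sIstar<sIwstar}'': enlarge the initial Cohen block so that the Main Lemma, upgraded from ``the Cohen family is $\rs_\infty$-unbounded'' to ``each $\sigma\in\str_\infty$ $\tri$-dominates fewer than $\kappa$ of the Cohen reals'', yields the lower bound on $\frakd(\rs_\infty)$, while the cofinal $\pstt$-generics and the meagerness of $\{y:y\tri\sigma\}$ for arbitrary $\sigma\in\str$ pin the other three invariants at $\lambda$. The one step you flag---running the Main Lemma on an arbitrary $\kappa$-subset---does go through via the standard reduction from ``$|D_\sigma|\geq\kappa$ in the extension'' to a single condition $p$ together with the $\kappa$-sized set $E=\{\alpha:\exists q\leq p\ (q\Vdash \dot{c}_\alpha\tri\dot{\sigma})\}$, after which the $\Delta$-system refinement is indeed indexing-oblivious.
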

    Thus, the remaining open relationship is:
    \begin{subque}
        Does $\ZFC$ prove $\frakd(\rss)\geq\frakd(\rss_\infty)$?
    \end{subque}

    \begin{question}\label{ques:pair}
        Is there another example of a pair of cardinal invariants that have the same relationship as $\sIstar$ and $\sIwstar$?
    \end{question}

    $\sIstar$ and $\sIwstar$ have the following features:
    \begin{enumerate}
        \item The corresponding poset $\pstt$ of $\sIwstar$ has compactness (and hence UF-limits).
        \item The corresponding poset $\pst$ of $\sIstar$ does not have compactness (or UF-limits), and Fr-limits (and UF-limits) keep $\sIstar$ small.
        \item Thus, $\sIstar<\sIwstar$ consistently holds. 
    \end{enumerate}
    We are not sure if there is another (non-artificial) example of a pair of two numbers such that their difference lies in whether their corresponding forcing notions have compactness or not, and consequently they are consistently different.



	\printbibliography

\end{document}